\numberwithin{equation}{section}
\theoremstyle{plain}
\newtheorem{prop}{Proposition}[section]
\newtheorem{thm}[prop]{Theorem}
\newtheorem{cor}[prop]{Corollary}
\newtheorem{lem}[prop]{Lemma}
\theoremstyle{definition}
\newtheorem{dfn}[prop]{Definition}
\newtheorem{lab}[prop]{}
\theoremstyle{remark}
\newtheorem{example}[prop]{Example}
\newtheorem{rem}[prop]{Remark}
\newtheorem{rems}[prop]{Remarks}
\renewcommand{\iff}{\Leftrightarrow}
\newcommand{\C}{{\mathbb{C}}}
\newcommand{\R}{{\mathbb{R}}}
\newcommand{\Z}{{\mathbb{Z}}}
\DeclareMathOperator{\disc}{disc}
\DeclareMathOperator{\res}{res}
\newcommand{\ol}{\overline}
\renewcommand{\epsilon}{\varepsilon}
\renewcommand{\theta}{\vartheta}
\def\lqf{\mathopen{<}}
\def\rqf{\mathclose{>}}
\renewcommand{\choose}[2]{\genfrac(){0pt}{}{#1}{#2}}
\begin{document}

\title
  [An elementary proof of Hilbert's theorem]
  {An elementary proof of\\Hilbert's theorem on ternary quartics}

\subjclass[2000]
{Primary
11E20, % Number Theory / Forms and linear algebraic groups /
       % General ternary and quaternary quadratic forms; forms of
       %    more than two variables
secondary
12D15} % Field theory and polynomials / Real and complex fields /
       % Fields related with sums of squares (formally real fields,
       %    Pythagorean fields, etc.)

%\keywords{}
%  {Sums of squares, positive polynomials,}

\author
  {Albrecht Pfister}
\address
  {Institut f\"ur Mathematik \\
  Johannes Gutenberg Universit\"at \\
  Staudingerweg 9 \\
  55099 Mainz \\
  Germany}
\email
  {pfister@mathematik.uni-mainz.de}

\author
  {Claus Scheiderer}
\address
  {Fachbereich Mathematik und Statistik \\
  Universit\"at Konstanz \\
  78457 Konstanz \\
  Germany}
\email
  {claus.scheiderer@uni-konstanz.de}

\begin{abstract}
In 1888, Hilbert proved that every nonnegative quartic form $f=f(x,y,
z)$ with real coefficients is a sum of three squares of quadratic
forms. His proof was ahead of its time and used advanced methods from
topology and algebraic geometry. Up to now, no elementary proof is
known. Here we present a completely new approach. Although our proof
is not easy, it uses only elementary techniques. As a by-product, it
gives information on the number of representations $f=p_1^2+p_2^2+p_3
^2$ of $f$ up to orthogonal equivalence. We show that this number is
$8$ for generically chosen $f$, and that it is $4$ when $f$ is chosen
generically with a real zero. Although these facts were known, there
was no elementary approach to them so far.
\end{abstract}

\maketitle

%===================================================================%

\section*{Introduction}

In 1888, David Hilbert published an influential paper \cite{H} which
became fundamental for real algebraic geometry, and which remains an
inspiring source for research even today.
It addresses the problem whether a real form (homogeneous polynomial)
$f(x_0,\dots,x_n)$ which takes nonnegative values on all of $\R^
{n+1}$ is necessarily a sum of squares of real forms. Hilbert proves
that the answer is negative in general. As is well-known, his results
go much beyond this fact and contain a surprising positive aspect as
well. Namely, for any pair $(n,d)$ of integers with $n\ge2$ and even
$d\ge4$, except for $(n,d)=(2,4)$, he shows that there exists a
nonnegative form of degree $d$ in $n+1$ variables which is not a sum
of squares of polynomials. In the exceptional case, however, he
proves that every nonnegative ternary quartic form is a sum of three
squares of real quadratic forms.

It is the existence of a representation $f=p_1^2+p_2^2+p_3^2$ in this
exceptional case that is the subject of the present article.
Hilbert's original proof is brief and elegant, and it is ahead of its
time in its topological arguments. For his contemporaries it must have
been hard to grasp.
Even today it is not easy to read, and it leaves a number of details
to be filled in. Several authors have given fully detailed accounts
of Hilbert's proof in recent years. We mention the approach due to
Cassels, published in Rajwade's book (\cite{Ra} chapter~7), and the
two articles by Rudin \cite{Ru} and Swan \cite{Sw}. These approaches
also show some characteristic differences.

One of the first approaches to Hilbert's theorem along elementary and
explicit lines was carried out by Powers and Reznick in \cite{PR},
where complete answers were given in certain special cases. We would
also like to point out the recent preprint \cite{PSV} by Plaumann,
Sturmfels and Vinzant which studies the computational side of
Hilbert's theorem, and which contains a beautiful blend of the 19th
century mathematics of ternary quartics.

So far, there seems to exist essentially only one proof different
from Hilbert's. It comes out as a by-product of the quantitative
analysis made in \cite{PRSS} and \cite{sch:jag}. These papers had a
different goal, namely to count the number of essentially distinct
ways in which a positive semidefinite (or \emph{psd}, for short)
ternary quartic $f$ can be written as a sum of three squares. The
case where the plane projective curve $f=0$ is non-singular is done
in \cite{PRSS}, the general irreducible case is in \cite{sch:jag}.
Both papers, and in particular the second, are using tools of modern
algebraic geometry and can certainly not be called elementary.

We are convinced that Hilbert's original proof from \cite{H} cannot
claim an elementary character either. This can be seen from the
following sketchy overview of its main steps:
\begin{itemize}
\item[(1)]
The set of sums of three squares of quadratic forms is closed inside
the space of all quartic forms. Therefore it suffices to prove the
existence of a representation for all forms in some open dense subset
of the psd forms, for example for all nonsingular such forms.
\item[(2)]
Hilbert proves that the map $(p_1,p_2,p_3)\mapsto\sum_{j=1}^3p_j^2$
(from triples of real quadratic forms to quartic forms) is submersive
(that is, its tangent maps are surjective), when restricted to the
open set of triples for which the curve $\sum_jp_j^2=0$ is
nonsingular. His elegant argument needs some non-trivial tool from
algebraic geometry, like Max Noether's $AF+BG$ theorem.
\item[(3)]
When the real form $f$ is strictly positive definite and singular,
the curve $f=0$ has at least two different (complex conjugate)
singular points.
\item[(4)]
The locus of quartic forms $f$ for which the curve $f=0$ has at least
two different singularities has codimension $\ge2$ inside the space
of all quartic forms.
\item[(5)]
Removing a subspace of codimension $\ge2$ from a connected
topological space leaves the remaining space connected. Hence, by (3)
and (4), the space of nonsingular positive forms is (path) connected.
\item[(6)]
There exist nonsingular positive forms which are sums of three
squares, like $f^{(0)}=x^4+y^4+z^4$.
\item[(7)]
Given an arbitrary nonsingular positive form $f$ there exists, by
(5), a path $f^{(t)}$, $0\le t\le1$, joining $f^{(1)}=f$ to a sum of
three squares $f^{(0)}$ such that $f^{(t)}$ is nonsingular and
positive for every $0\le t\le1$.
\item[(8)]
Using (1) and (2), and by the implicit function theorem, the
representation (6) of $f^{(0)}$ can be extended continuously along
the path $f^{(t)}$ to a representation of $f^{(1)}=f$ as a sum of
three squares.
\end{itemize}
In view of (2), and certainly of (4) and (5), this proof does not
have an elementary character. Also note that the existence of a path
$f^{(t)}$ as in (7) is ensured only by the general topological fact
(5). There is no concrete construction of such a path.

Our proof uses a variant of (1),
plus applications of the
implicit function theorem similar to (8). Otherwise it proceeds
differently. In particular, we avoid the non-elementary steps (2),
(4) and (5). Like Hilbert we are deforming representations along
paths. Other than in Hilbert's proof, however, our paths are
completely explicit, and are in fact simply straight line segments.
Here is a road map:
\begin{itemize}
\item[(a)]
By a limit argument (see \ref{limarg}), it suffices to prove the
existence of a representation for generic psd $f$, i.e., for psd $f$
satisfying a condition $\Psi(f)\ne0$ where $\Psi$ is a suitable
nonzero polynomial in the coefficients of $f$.
\item[(b)]
When the form $f$ has a non-trivial real zero, an elementary and
constructive proof for the existence of a representation as a sum of
three squares was given by the first author in \cite{Pf}. We shall
recall it in Sect.~\ref{elemreal0} below.
\item[(c)]
Assume that $f$ has no non-trivial real zero. We find a psd form
$f^{(0)}$ that has a non-trivial real zero such that the half-open
interval $\left]f^{(0)},f\right]$ (in the space of all quartic forms)
consists of strictly positive forms.
\item[(d)]
Let $f^{(t)}$ ($0\le t\le1$) denote the forms in the line segment
constructed in (c), with $f^{(1)}=f$. Under generic assumptions on
$f$ we show that every representation of $f^{(0)}$ can be extended
continuously to a representation of $f^{(t)}$ for $0<t<\epsilon$,
with some $\epsilon>0$.
\item[(e)]
Under further generic assumptions on $f$
we prove for every fixed $0<t\le1$ that every representation of $f^
{(t)}$ can be extended continuously and uniquely to a representation
of $f^{(s)}$ for all $s$ sufficiently close to $t$. Both in (d) and
(e) we use the theorem on implicit functions.
\item[(f)]
Using the limit principle (a), it follows that $f=f^{(1)}$ has a
representation as a sum of three squares.
\end{itemize}
All our ``generic assumptions'' on $f$ are explicit. See
\ref{exclist} for the entire list and for a discussion of where they
have been used. The exceptional cases that we have to exclude are
given by the vanishing of invariants that are mostly discriminants or
resultants of polynomials formed from (the coefficients of) $f$. Two
of our invariants are of a more general nature, one of them having
the amazing degree of~$896$ in the coefficients of $f$.

We believe that we have thus achieved a proof to Hilbert's theorem
that only uses elementary tools. With only little extra effort, our
arguments allow in fact to deduce substantial information on the
number of essentially distinct representations, at least in generic
cases. So far there has been no elementary approach to counting
representations. Therefore we think it worthwile to include these
parts.

Here is an overview of the structure of the paper. We start with the
case where $f$ has a real zero. By an explicit argument we show that
$f$ has a representation as a sum of three squares (Prop.\
\ref{exrep}). Refining the arguments yields the precise number of
inequivalent representations, under suitable hypotheses of generic
nature (Prop.\ \ref{summreal0}). In Section~\ref{sec:noreal0} we turn
to arbitrary psd quartic forms $f$. We show that $f$ can be written
as a sum of three squares, if and only if there exists a
polynomial-valued rational point (with certain side conditions) on a
certain elliptic curve associated with $f$ (Prop.\ \ref{pfthm}). No
background or terminology on elliptic curves is used. Again we refine
this by a result that permits to count representations (Prop.\
\ref{eindtfall1}). Then we construct the linear path $f^{(t)}$ ($0\le
t\le1$) referred to in (d) above and study the extension of
representations along this path. Extension around $t=0$ is studied in
Section~\ref{sec:defqui}, around $0<t<1$ in Sections
\ref{sec:defquii} and \ref{sec:defquiii}. In between we insert two
sections that provide the required background on symmetric functions.
Section~\ref{sec:disc} has classical material on the discriminant. To
handle the last case of the extension argument, we need an invariant
$\Phi(f,g,h)$ of triples of polynomials which is less standard; it is
introduced and discussed in Section \ref{sec:monster}. This invariant
essentially decides if the pencil spanned by $g$ and $h$ contains a
member that has a quadratic factor in common with $f$. We do not know
whether this invariant has been considered before. Finally, in
Section~\ref{sec:summ} we summarize our proof and give a systematic
account of all the genericity conditions used. We also obtain the
precise number of representations of $f$ under (explicit) generic
assumptions on $f$.

Basically, we consider techniques as ``elementary'' if they are
accessible using undergraduate mathematics. The most advanced
features that we use are the theorem on implicit functions and the
theorem on symmetric functions. Only once (in the proof of Prop.\
\ref{pfprop}(b)) are we using slightly more advanced algebraic
techniques, namely basic facts about Dedekind domains. However, this
part is only used for counting representations, and is not needed for
the proof of Hilbert's theorem.

We believe that our approach to representations as sums of three
squares is also ``constructive'', at least in a weak sense. It should
be possible to follow our deformation argument for constructing such
representations with arbitrary numeric precision, for example by
using finite element methods.

%-------------------------------------------------------------------%

\section{The forms $\lqf1,q\rqf$}

As usual, a polynomial $f(x_1,\dots,x_n)$ with real coefficients is
said to be \emph{positive semidefinite} (or \emph{psd} for short) if
$f$ takes nonnegative values on $\R^n$. It is said to be
\emph{positive definite} if $f(x)>0$ for all $x\in\R^n$. When
speaking of homogeneous polynomials (also called forms), one requires
$f(x)>0$ only for $x\ne(0,\dots,0)$, in order to call $f$ positive
definite.

We shall mostly be working with homogeneous polynomials, except when
it becomes more convenient to dehomogenize. We start with univariate
(inhomogeneous) real polynomials.

\begin{prop}\label{pfprop}%
Let $q\in\R[x]$ be a positive definite polynomial of degree two.
\begin{itemize}
\item[(a)]
Given any psd polynomial $f\in\R[x]$, there are polynomials $\xi$,
$\eta\in\R[x]$ with
\begin{equation}\label{norm}%
f\>=\>\eta^2+q\xi^2.
\end{equation}
\item[(b)]
Assume that $f\ne0$ in (a) satisfies $\deg(f)=2d$. Then the total
number of solutions $(\xi,\eta)$ to \eqref{norm} is $\le2^{d+1}$,
with equality if and only if $q\nmid f$ and $f$ is square-free.
\end{itemize}
\end{prop}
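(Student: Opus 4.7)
For part (a), my plan is to identify the set
\[
T := \{f \in \R[x] : f = \eta^2 + q\xi^2 \text{ for some } \eta,\xi \in \R[x]\}
\]
as a multiplicative monoid and to reduce the problem to showing that every irreducible positive definite quadratic $p \in \R[x]$ lies in $T$. Closure under multiplication follows from the composition identity
\[
(a^2 + qb^2)(c^2 + qd^2) = (ac - qbd)^2 + q(ad + bc)^2,
\]
which expresses multiplicativity of the norm form associated to the extension $\R[x][y]/(y^2+q)$. Since any psd $f$ factors over $\R$ as $c \prod_i (x-r_i)^{2k_i} \prod_j p_j^{s_j}$ with the $p_j$ monic irreducible positive definite quadratics, and positive constants together with squares $(x-r)^{2k}$ of real linear factors lie in $T$ trivially, the remaining step is to handle a single positive definite quadratic $p$. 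For such $p$, set $m := \inf_{x \in \R} p(x)/q(x) > 0$. Either $m$ is attained at some $x_0 \in \R$ (so $p-mq$ is psd of degree $\le 2$ with a real double zero, hence the square of a linear polynomial), or $m = \lim_{x\to\pm\infty} p/q$ and $p-mq$ is a positive constant. In both cases $p-mq$ is a square in $\R[x]$, so $p = mq + (p-mq) \in T$.

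For part (b), I would pass to the quadratic $\R[x]$-algebra $S := \R[x,y]/(y^2 + q)$, endowed with the norm $N(\eta + y\xi) = \eta^2 + q\xi^2$, so that solutions of \eqref{norm} correspond bijectively to elements $\alpha \in S$ with $N(\alpha) = f$. A direct check that $\partial_x(y^2 + q) = q'$ is nonvanishing at the (complex) zeros of $q$ shows that $\mathrm{Spec}(S)$ is a smooth affine curve, and hence $S$ is a Dedekind domain. The essential structural input I will need is that $S$ is in fact a principal ideal domain: the projective closure $C \subset \mathbb{P}^2_\R$ is defined by a positive definite ternary form (an immediate consequence of the positive-definiteness of $q$), so $C$ is an anisotropic smooth conic with $C(\R) = \emptyset$. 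Severi--Brauer theory (or a direct genus-$0$ divisor calculation) then gives $\mathrm{Pic}(C) = 2\Z$, generated by the unique degree-$2$ point at infinity, and removing that point kills the Picard group.

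With $S$ a PID the count becomes combinatorial bookkeeping. Decomposing $(f)$ into prime ideals of $S$: each $(x - r_i)$ is inert (since $q(r_i) > 0$ makes $y^2 + q(r_i)$ irreducible over $\R$), the prime $q$ ramifies as $qS = (y)^2$, and each irreducible quadratic factor $p_j \ne q$ of $f$ splits as $\mathfrak p_j \bar{\mathfrak p}_j$. An equation $(f) = I \bar I$ is then forced at self-conjugate primes (inert and ramified) but offers $s_j + 1$ choices at each split prime with exponent $s_j$ in $f$. Principality gives a generator of $I$ unique up to $S^\times = \R^\times$; requiring the exact norm $N(\alpha) = f$ narrows this to $\pm \alpha$. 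Hence the total count equals $2 \prod_{p_j \ne q}(s_j + 1)$. The elementary inequality $s + 1 \le 2^s$ (with equality only at $s = 0, 1$), combined with $\sum_{p_j \ne q} s_j \le d$ (a consequence of the degree identity), yields the bound $2^{d+1}$, with equality exactly when every $s_j = 1$ and the split-quadratic factors exhaust $\deg(f)$, that is, exactly when $f$ is square-free and $q \nmid f$.

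The main obstacle I foresee is the structural claim that $S$ is a principal ideal domain; everything else reduces to the composition identity and to elementary degree counting.
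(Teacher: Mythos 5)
Your proof is correct. Part (a) follows essentially the paper's route: reduce to a single irreducible positive-definite quadratic $p$ via the multiplication formula, then exhibit a decomposition $p = mq + (\text{square})$ — the paper does this after normalizing $q = x^2+1$ and solving explicitly for $\lambda = \xi^2$, while your formulation via $m = \inf_x p/q$ skips the normalization, a cosmetic difference only. For part (b) the skeleton also agrees: pass to the Dedekind domain $S = \R[x,\sqrt{-q}]$, count elements of norm $f$ via factorization into primes, tally the $s_j+1$ choices at each split prime to obtain $2\prod_{p\ne q}(s_j+1) \le 2^{d+1}$, and identify the equality case. The one genuine divergence is how you establish that $S$ is a PID. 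You invoke Severi--Brauer theory: the projective conic is anisotropic over $\R$, so $\mathrm{Pic}$ is generated by the degree-$2$ point at infinity, and removing that point kills $\mathrm{Pic}$. The paper instead notices that \emph{part (a) already produces generators}: for a split prime $p$ one has $p = \eta^2+q\xi^2 = (\eta+\xi\sqrt{-q})(\eta-\xi\sqrt{-q})$, so the two primes above $p$ are visibly principal, and the inert and ramified primes are principal for trivial reasons; every nonzero prime of $S$ then has an explicit generator, giving a PID with no geometric input. The paper's argument is shorter, considerably more elementary (in line with the paper's stated aim of minimizing machinery), and tighter in that it reuses (a); your Picard-group route is equally valid but imports the index-$2$ theory of genus-$0$ curves over $\R$, which is a nontrivial step up in sophistication.
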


For the proof of Hilbert's theorem we only need part (a). The
second statement will be used in our count of representations.

\begin{proof}
Clearly, $q$ and $f$ may be scaled by any positive real number. By
changing the generator $x$ of the polynomial ring if necessary, we
may therefore assume $q=x^2+1$.

First assume that $f$ is monic of degree~$2$, say $f=(x+a)^2+
b^2$ with real numbers $a$ and $b$. Then $\xi$ as in \eqref{norm} has
to be a constant, and we write $\xi^2=\lambda$. Given $\lambda\in\R$,
the polynomial
$$f-\lambda q\>=\>(1-\lambda)x^2+2ax+(a^2+b^2-\lambda)$$
is a square if and only if either $\lambda=1$, $a=0$ and $b^2\ge1$,
or else $\lambda<1$ and
\begin{equation}\label{3.2}
(1-\lambda)(a^2+b^2-\lambda)-a^2\>=\>0
\end{equation}
(vanishing of the discriminant of $f-\lambda q$). In any case, there
is precisely one value of $\lambda\ge0$ for which $f-\lambda q$ is a
square: For $a=0$, this is $\lambda=\min\{1,b^2\}$, while for $a\ne0$
it is the unique $0\le\lambda<1$ for which \eqref{3.2} vanishes.
(Note that the left hand side of \eqref{3.2} is positive for $\lambda
\gg0$, is $b^2\ge0$ for $\lambda=0$, and is $-a^2<0$ for $\lambda=
1$.) Hence $\xi^2=\lambda$ and $\eta^2=f-q\xi^2$ as in \eqref{3.2}
exist and are unique. Note that there are exactly four possibilities
for the pair $(\xi,\eta)$, except when $f$ or $fq$ is a square. (In
these cases there exist precisely two possibilities, provided
$f\ne0$).

When $f$ is an arbitrary psd polynomial, we can write $f$ as a
product of quadratic psd polynomials. Using the quadratic case just
established, together with the multiplication formul\ae
\begin{equation}\label{multfml}
(a^2+b^2q)(c^2+d^2q)\>=\>(ac\pm bdq)^2+(ad\mp bc)^2q,
\end{equation}
we conclude that $f$ has a representation \eqref{norm}. This proves
(a).

For the proof of (b) we use some basic facts about prime ideal
factorization in Dedekind domains. Let $L=\R(x,\,\sqrt{-q})$, a
quadratic extension of the field $\R(x)$. The integral closure $B$ of
$\R[x]$ in $L$ is a Dedekind domain. It consists of all elements in
$L$ whose norm and trace are in $\R[x]$, from which we see $B=\R[x,\,
\sqrt{-q}]$. The behaviour of the primes in the extension $\R[x]
\subset B$ is easy to see: The linear polynomials $\ell$ in $\R[x]$
are unramified in $B$ and remain prime in $B$, having a quadratic
extension of the residue field. The monic irreducible quadratic
polynomials $p\ne q$ in $\R[x]$ are positive definite, hence they
split into a product $p=p_1p_2$ of two primes in $B$ not associated
to each other, by
\eqref{norm},
while the prime $q$ of $\R[x]$ is ramified. Hence $B$ is a principal
ideal domain. Since $\eta^2+q\xi^2=(\eta+\xi\sqrt{-q})(\eta-\xi
\sqrt{-q})$ is the norm of $\eta+\xi\sqrt{-q}$ in the extension
$\R[x]\subset B$ (for $\xi$, $\eta\in\R[x]$), the number of
representations \eqref{norm} of $f$ is equal to the number of
elements in $B$ of norm $f$.

The norms of the prime elements of $B$ are $N(l)=l^2$, $N(p_1)=N(p_2)
=p$ and $N(\sqrt{-q})=q$. This shows that the number of elements in
$B$ of norm $f$ is obtained as follows: Every factor $p^m$ (for $p\ne
q$ quadratic irreducible) contributes $m+1$ solutions; multiply all
these numbers, and multiply the result by $2$. In other words, the
precise number is (for $f\ne0$)
$$2\prod_p(1+v_p(f)),$$
product over the monic irreducible polynomials $p\ne q$ of degree~$2$.
From this the assertion in (b) is clear.
\end{proof}

It would be possible to present the arguments for part (b) in a way
that avoids using any theory of Dedekind rings. However we felt that
trying this is not worth the effort.

Later it will be preferable for us to use Prop.\ \ref{pfprop} in a
homogenized version. For convenience we state this version here:

\begin{cor}\label{pfproph}%
Let $q\in\R[x,y]$ be a positive definite quadratic form. Given any
psd form $f\in\R[x,y]$ of degree $2d$, there exist forms $\xi$, $\eta
\in\R[x,y]$ with $\deg(\xi)=d-1$, $\deg(\eta)=d$ and $f=\eta^2+
q\xi^2$. The number of such pairs $(\xi,\eta)$ is $\le2^{d+1}$, with
equality if and only if $q\nmid f$ and $f$ is square-free.
\qed
\end{cor}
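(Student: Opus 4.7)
The strategy is to deduce this from Proposition~\ref{pfprop} by dehomogenizing in $y$. Since $q$ is positive definite, $q(1,0)>0$ and $q(0,1)>0$; in particular $y\nmid q$, so $\tilde q(x):=q(x,1)$ is a positive definite quadratic polynomial, and $\tilde f(x):=f(x,1)$ is a psd polynomial in $\R[x]$.

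I would first treat the case $y\nmid f$, in which $\deg\tilde f=2d$. Proposition~\ref{pfprop}(a) then provides $\tilde\xi,\tilde\eta\in\R[x]$ with $\tilde f=\tilde\eta^2+\tilde q\tilde\xi^2$, and since both summands have nonnegative leading coefficients there can be no cancellation at top order, forcing $\deg\tilde\eta\le d$ and $\deg\tilde\xi\le d-1$. Setting $\eta(x,y):=y^d\tilde\eta(x/y)$ and $\xi(x,y):=y^{d-1}\tilde\xi(x/y)$ yields the desired homogeneous forms. Dehomogenization and rehomogenization are mutually inverse bijections between homogeneous pairs $(\xi,\eta)$ as in the statement and the affine pairs $(\tilde\xi,\tilde\eta)$ solving \eqref{norm} for $\tilde f$; moreover, using $y\nmid f$, one has $q\mid f$ iff $\tilde q\mid\tilde f$, and $f$ is square-free iff $\tilde f$ is. The count and equality clause of Proposition~\ref{pfprop}(b) therefore transfer verbatim.

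For the case $y\mid f$, psd-ness forces the $y$-multiplicity of $f$ to be an even positive integer $2k$; write $f=y^{2k}f'$ with $y\nmid f'$ and $\deg f'=2(d-k)$. The key point is that every representation $f=\eta^2+q\xi^2$ automatically satisfies $y^k\mid\eta$ and $y^k\mid\xi$: reducing modulo $y$ gives $\eta(x,0)^2+q(1,0)\,x^2\,\xi(x,0)^2=0$, and $q(1,0)>0$ forces $\eta(x,0)=\xi(x,0)=0$; iterating $k$ times yields $\eta=y^k\eta_1$, $\xi=y^k\xi_1$ with $f'=\eta_1^2+q\xi_1^2$ and the correct degrees $d-k$ and $d-k-1$. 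This sets up a bijection between representations of $f$ and of $f'$, so by the previous case the count for $f$ is at most $2^{(d-k)+1}<2^{d+1}$. Since $y\mid f$ already makes $f$ non-square-free, the equality clause correctly fails here.

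The only step that requires a genuine argument rather than pure bookkeeping is the mod-$y$ reduction that forces $y^k\mid\eta,\xi$ when $y\mid f$; everything else is a mechanical translation of Proposition~\ref{pfprop} via (de)homogenization, so I do not anticipate any serious obstacle.
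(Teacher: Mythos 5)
Your proposal is correct and takes the same route as the paper, which records Corollary \ref{pfproph} as an immediate consequence of Proposition \ref{pfprop} via (de)homogenization and leaves the verification to the reader. You have filled in exactly the routine details this requires: the automatic degree bounds from positivity of the leading coefficients, the transfer of the conditions $q\nmid f$ and square-freeness under the bijection, and the separate treatment of the case $y\mid f$ via reduction modulo $y$.
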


%-------------------------------------------------------------------%

\section{The case where $f$ has a real zero}\label{elemreal0}%

\begin{lab}
Let $f=f(x,y,z)$ be a psd quartic form in $\R[x,y,z]$, and assume
that $f=0$ has a nontrivial real zero. Changing coordinates linearly
we can assume $f(0,0,1)=0$, hence
\begin{equation}\label{nfreal0}
f\>=\>f_2(x,y)\cdot z^2+f_3(x,y)\cdot z+f_4(x,y)
\end{equation}
where $f_j=f_j(x,y)$ is a binary form of degree $j$ ($j=2,3,4$). That
$f$ is psd means that each of the three binary forms
$$f_2,\ f_4,\ 4f_2f_4-f_3^2$$
is psd, that is, a sum of two squares. By an argument which is
entirely elementary and explicit, we shall construct a representation
of $f$ as a sum of three squares (Proposition \ref{exrep}). For
generically chosen $f_2$, $f_3$, $f_4$, we shall in fact construct
all such representations (Proposition \ref{summreal0}). This second
part is not needed for the proof of Hilbert's theorem.
\end{lab}

\begin{lab}\label{pfkonstr1}%
Let us start by showing that $f$ is a sum of three squares.
If $f_2=0$ then also $f_3=0$, and hence $f=f_4$ is a psd binary form,
therefore a sum of two squares. If $0\ne f_2=l^2$ is a square of a
linear form, then $4l^2f_4\ge f_3^2$ shows $l\mid f_3$, say $f_3=
2lg_2$. Observe that $f_4-g_2^2$ is a sum of two squares since $4l^2
(f_4-g_2^2)=4f_2f_4-f_3^2$ is a sum of two squares. Therefore $f=
(lz+g_2)^2+(f_4-g_2^2)$ is a sum of three squares.
\end{lab}

\begin{lab}\label{pfkonstr2}%
It remains to discuss the case where $f_2$ is strictly positive
definite. From Cor.\ \ref{pfproph} we see that there exist binary
forms $\xi=\xi(x,y)$ and $\eta=\eta(x,y)$ with $\deg(\xi)=2$, $\deg
(\eta)=3$ and $\eta^2+\xi^2f_2=4f_2f_4-f_3^2$, that is,
\begin{equation}\label{pfbeding}%
\eta^2+f_3^2=f_2(4f_4-\xi^2).
\end{equation}
On the other hand, since $f_2$ is psd, there are linear forms $l_1$,
$l_2\in\R[x,y]$ with $f_2=l_1^2+l_2^2=(l_1+il_2)(l_1-il_2)$ ($i^2=
-1$). By similarly factoring the left hand side of \eqref{pfbeding},
it follows that $l_1+il_2$ divides one of $\eta\pm if_3$. Replacing
$l_2$ by $-l_2$ if necessary we can assume
$$(l_1+il_2)\>\mid\>(\eta+if_3).$$
This implies that $f_2$ divides $(\eta+if_3)(l_1-il_2)=(\eta l_1+
f_3l_2)+i(f_3l_1-\eta l_2)$. Hence $f_2$ divides both real and
imaginary part of the right hand form. So the fractions
$$h_1:=\frac{f_3l_1-\eta l_2}{2f_2},\qquad h_2:=\frac{\eta l_1+
f_3l_2}{2f_2}$$
are binary quadratic forms (with real coefficients), and
\eqref{pfbeding} implies
$$h_1^2+h_2^2=\frac{(\eta^2+f_3^2)(l_1^2+l_2^2)}{4f_2^2}=\frac
{\eta^2+f_3^2}{4f_2}=f_4-\frac14\xi^2.$$
Moreover
$$h_1l_1+h_2l_2\>=\>\frac{f_3(l_1^2+l_2^2)}{2f_2}\>=\>\frac12\,f_3,$$
and so
$$f=\Bigl(\frac\xi2\Bigr)^2+(h_1+l_1z)^2+(h_2+l_2z)^2$$
is a sum of three squares of quadratic forms. We have thus proved:
\end{lab}

\begin{prop}\label{exrep}
Let $f\in\R[x,y,z]$ be a psd quartic form which has a nontrivial real
zero. Then $f$ is a sum of three squares of quadratic forms in $\R[x,
y,z]$.
\qed
\end{prop}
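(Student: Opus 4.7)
My plan is to reduce to a normal form and split into cases according to the positivity of the quadratic coefficient of $z$. After a real linear change of coordinates I may assume the given nontrivial real zero is $(0,0,1)$, so that $f$ takes the shape $f_2(x,y)\,z^2 + f_3(x,y)\,z + f_4(x,y)$ with $f_j\in\R[x,y]$ a binary form of degree $j$. Viewing $f$ as a quadratic polynomial in $z$, its being psd on $\R^3$ is equivalent to $f_2$, $f_4$, and the discriminant $4f_2f_4-f_3^2$ all being psd binary forms (each is therefore a sum of two squares by the standard factorization over $\R$). A case analysis on the nature of $f_2$ then does the job.

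The degenerate cases are easy to dispose of. If $f_2\equiv 0$, the discriminant condition forces $f_3\equiv 0$ as well, so $f=f_4$ is a binary psd form and hence a sum of two squares. If $f_2=l^2$ is the square of a linear form, then $4l^2f_4\ge f_3^2$ forces $l\mid f_3$, say $f_3=2lg_2$; completing the square in $z$ gives $f=(lz+g_2)^2+(f_4-g_2^2)$, and the same discriminant inequality shows $f_4-g_2^2$ is itself a binary psd form, hence a sum of two squares.

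The main case is $f_2$ strictly positive definite, and here I would call on Corollary~\ref{pfproph} with $q=f_2$, applied to the psd binary quartic $4f_2f_4-f_3^2$ (of degree $2d=4$). This produces binary forms $\xi$ of degree~$2$ and $\eta$ of degree~$3$ satisfying $\eta^2+f_2\xi^2=4f_2f_4-f_3^2$, which rearranges to the key identity $\eta^2+f_3^2=f_2(4f_4-\xi^2)$. Since $f_2$ is positive definite it factors in $\C[x,y]$ as $(l_1+il_2)(l_1-il_2)$ with real linear forms $l_1,l_2$, and the left-hand side factors similarly as $(\eta+if_3)(\eta-if_3)$. Unique factorization in $\C[x,y]$ then forces one of $l_1\pm il_2$ (say, after swapping the sign of $l_2$ if needed, $l_1+il_2$) to divide $\eta+if_3$.

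Once that divisibility is in place, the conclusion is a clean computation. Multiplying $\eta+if_3$ by $l_1-il_2$ gives an expression divisible by $f_2$ in $\C[x,y]$, so its real and imaginary parts are both divisible by $f_2$ in $\R[x,y]$ and yield binary quadratic forms $h_1,h_2$. The identity $\eta^2+f_3^2=f_2(4f_4-\xi^2)$ together with $l_1^2+l_2^2=f_2$ then gives $h_1^2+h_2^2=f_4-\xi^2/4$ and $h_1l_1+h_2l_2=f_3/2$, so that $f=(\xi/2)^2+(l_1z+h_1)^2+(l_2z+h_2)^2$ is manifestly a sum of three squares of quadratic forms. The main substantive step is producing the forms $\xi,\eta$ via Corollary~\ref{pfproph}; the complex factorization trick that converts this ``norm'' representation into a sum of two squares after completing the square in $z$ is the only remaining ingenuity, and it is the natural one to try given that $f_2$ and $\eta^2+f_3^2$ both factor as norms from $\C[x,y]$.
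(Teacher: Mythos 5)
Your proposal is correct and follows essentially the same route as the paper: normalize to $f=f_2z^2+f_3z+f_4$, dispose of the cases $f_2=0$ and $f_2=l^2$ directly, and in the definite case invoke Corollary~\ref{pfproph} and the complex factorization $f_2=(l_1+il_2)(l_1-il_2)$ to produce the same three-square decomposition $f=(\xi/2)^2+(l_1z+h_1)^2+(l_2z+h_2)^2$. One small slip: $4f_2f_4-f_3^2$ is a binary form of degree $6$, not $4$, but your stated degrees $\deg\xi=2$, $\deg\eta=3$ are the correct ones, so the argument goes through as written.
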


Note that the proof was entirely explicit and constructive.

We now turn to the task of determining all representations of $f$, at
least in the case when $f_2$, $f_3$, $f_4$ are chosen generically.
For this, the following definition is useful.

\begin{dfn}\label{dfnortheq}
Two representations
$$f\>=\>\sum_{i=1}^3p_i^2\>=\>\sum_{i=1}^3p_i'^2$$
with quadratic forms $p_i$, $p'_i\in\R[x,y,z]$ are said to be
\emph{(orthogonally) equivalent} if there exists an orthogonal matrix
$S=(s_{ij})\in\O_3(\R)$ such that
$$p_j'\>=\>\sum_{i=1}^3s_{ij}p_i\quad(j=1,2,3).$$
\end{dfn}

\begin{lab}\label{rep2xieta}
Let $f=f_2z^2+f_3z+f_4$ be a psd form as in \eqref{nfreal0}. We
assume that $f_2$ is not a square, hence is strictly positive
definite. Assume
\begin{equation}\label{darst}
f=\sum_{i=1}^3(v_iz+w_i)^2
\end{equation}
where $v_i$ resp.\ $w_i\in\R[x,y]$ are homogeneous of respective
degrees $1$ resp.~$2$ ($i=1,2,3$). We first show how to associate
with \eqref{darst} a solution $(\xi,\eta)$ of \eqref{pfbeding}.

Consider the column vectors $v=(v_1,v_2,v_3)^t$ and $w=(w_1,w_2,w_3)
^t$ with polynomial entries. Since the linear forms $v_1$, $v_2$,
$v_3$ are linearly dependent, there is an orthogonal matrix $S\in\O_3
(\R)$ such that the first entry of the column $Sv$ is zero. Replacing
$v$ resp.\ $w$ by $Sv$ resp.\ $Sw$ yields an equivalent
representation $f=\sum_{i=1}^3(v'_iz+w'_i)^2$ in which $v'_1=0$. So
up to replacing \eqref{darst} by an equivalent representation we can
assume $v_1=0$, and get accordingly
$$f_2=v_2^2+v_3^2,\quad f_3=2(v_2w_2+v_3w_3),\quad f_4=w_1^2+w_2^2+
w_3^2.$$
Putting $\xi:=2w_1$ and $\eta:=2(v_2w_3-v_3w_2)$ gives
\begin{align*}
\eta^2+f_3^2 & = 4(v_2w_3-v_3w_2)^2+4(v_2w_2+v_3w_3)^2 \\
& = 4(v_2^2+v_3^2)(w_2^2+w_3^2) \\
& = f_2(4f_4-\xi^2)
\end{align*}
so $(\xi,\eta)$ solves \eqref{pfbeding}.

Note that a different choice of $S$ does not change $\xi^2$ and
$\eta^2$. Indeed, the first row of $S$ is unique up to a factor
$\pm1$ since $v_1$, $v_2$, $v_3$ span the space of linear forms in
$\R[x,y]$. Therefore $\pm\xi$ does not change if $S$ is chosen
differently. The same argument shows that $\xi^2$ and $\eta^2$ depend
only on the equivalence class of \eqref{darst}.
\end{lab}

\begin{lab}\label{gener164}
When $f_2$ is not a square, note that the number of solutions $(\xi,
\eta)$ of \eqref{pfbeding} was determined in Prop.\ \ref{pfprop}(b).
In particular, it was shown there that this number is $\le16$, and is
equal to $16$ if and only if $f_2\nmid f_3$ and $4f_2f_4-f_3^2$ is
square-free. In this latter case, the pair $(\xi^2,\eta^2)$ can
therefore take precisely four different values.
\end{lab}

\begin{lab}\label{eindtfall0}
Assume that $f_2$ is not a square, that $f_2\nmid f_3$ and $4f_2f_4
-f_3^2$ is square-free. We show that inequivalent representations
\eqref{darst} give different solutions $(\xi^2,\eta^2)$ to
\eqref{pfbeding}. Combined with \ref{gener164}, this will imply that
$f$ has precisely four different representations up to equivalence.

Let
\begin{align*}
f & = w_1^2+(v_2z+w_2)^2+(v_3z+w_3)^2 \\
& =  w_1'^2+(v_2'z+w_2')^2+(v_3'z+w_3')^2
\end{align*}
be two representations with the same invariants $\xi^2$, that is,
with $w_1^2=w_1'^2=\frac{\xi^2}4$. Then $v_2w_3-v_3w_2=\pm(v'_2w'_3
-v'_3w'_2)$, and we can assume
$$v_2w_3-v_3w_2=v'_2w'_3-v'_3w'_2$$
by multiplying $v_2z+w_2$ with $-1$ if necessary. Writing $v=v_2+
iv_3$, $w=w_2+iw_3$ and $v'=v'_2+iv'_3$, $w'=w'_2+iw'_3$ this means
$\Im(\ol vw)=\Im(\ol v'w')$. On the other hand we have
$$v\ol v=v'\ol v'=f_2,\quad\Re(\ol vw)=\Re(\ol v'w')=\frac12f_3,\quad
4w\ol w=4w'\ol w'=4f_4-\xi^2,$$
and we conclude
\begin{equation}\label{vws}
\ol vw=\ol v'w'.
\end{equation}
Now $\ol v$ does not divide $w'$, because otherwise $v\ol v=f_2$
would divide $4w'\ol w'=4f_4-\xi^2$, and hence we would have
$$f_2^2\mid(\eta^2+f_3^2)=(\eta+if_3)(\eta-if_3),$$
whence $f_2\mid f_3$, which was excluded. Comparing the two products
\eqref{vws} we see that there exist $\lambda$, $\mu\in\C$ with $v'=
\lambda v$ and $w'=\mu w$, and clearly we must have $|\lambda|=|\mu|
=1$. Therefore \eqref{vws} shows $\lambda=\mu$. This means that the
two representations we started with are equivalent.
\end{lab}

We summarize these discussions:

\begin{prop}\label{summreal0}
Let $f=f_2z^2+f_3z+f_4$ be psd (with $f_i\in\R[x,y]$ homogeneous of
degree~$i$, for $i=2,3,4$), and assume that $f_2$ is not a square.
\begin{itemize}
\item[(a)]
Associated with each representation of $f$ as a sum of three squares
is a well-defined solution of
$$\eta^2+f_3^2\>=\>f_2(4f_4-\xi^2)$$
such that $\xi^2$ and $\eta^2$ depend only on the orthogonal
equivalence class of the representation.
\item[(b)]
If $f_2\nmid f_3$ and $4f_2f_4-f_3^2$ is square-free, then any two
representations of $f$ with the same invariants $\xi^2$, $\eta^2$ are
equivalent. There exist precisely four different equivalence classes
of representations of $f$.
\qed
\end{itemize}
\end{prop}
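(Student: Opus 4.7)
My plan is to assemble the constructions developed in paragraphs \ref{rep2xieta}--\ref{eindtfall0} into a coherent proof. Part (a) packages the construction of \ref{rep2xieta}; part (b) combines the counting from Cor.~\ref{pfproph}, the existence argument of \ref{pfkonstr2}, and the injectivity argument of \ref{eindtfall0}.

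For (a), starting from an arbitrary representation $f = \sum_{i=1}^3(v_iz+w_i)^2$, I would pick $S \in \O_3(\R)$ that kills the first entry of $(v_1, v_2, v_3)^t$, and then set $\xi := 2w_1$ and $\eta := 2(v_2w_3 - v_3w_2)$ in the normalized triple. Routine expansion using $f_2 = v_2^2 + v_3^2$ and $f_3 = 2(v_2w_2 + v_3w_3)$ yields the required relation $\eta^2 + f_3^2 = f_2(4f_4 - \xi^2)$. Well-definedness of $(\xi^2, \eta^2)$ on the orthogonal equivalence class rests on the observation that $f_2$ not being a square forces $v_1, v_2, v_3$ to span the two-dimensional space of linear forms; the freedom in $S$ then reduces to a sign on the first row together with an $\O_2(\R)$-rotation on the remaining two, both of which visibly preserve $\xi^2$ and $\eta^2$.

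For (b), the plan is to apply Cor.~\ref{pfproph} to $q := f_2$ and to the psd binary sextic $F := 4f_2f_4 - f_3^2$ (so $d = 3$). Since $f_2$ is $\R$-irreducible (a positive definite quadratic), the hypothesis $f_2 \nmid f_3$ is equivalent to $f_2 \nmid F$, and combined with $F$ square-free the corollary yields exactly $2^{d+1} = 16$ pairs $(\xi, \eta) \in \R[x,y]^2$ solving \eqref{pfbeding}. These partition into at most four classes under the sign action $(\xi, \eta) \mapsto (\pm\xi, \pm\eta)$; for each such class the explicit construction of \ref{pfkonstr2} produces an actual representation of $f$ with those invariants. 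Together with the injectivity proved in \ref{eindtfall0}, which forces the map from equivalence classes to $(\xi^2, \eta^2)$-values to be one-to-one, this pins down exactly four equivalence classes.

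The substantive step is the injectivity argument of \ref{eindtfall0}. My plan is to complexify via $v := v_2 + iv_3$, $w := w_2 + iw_3$ and the primed analogues, so that $v\bar v = v'\overline{v'} = f_2$, $\Re(\bar v w) = \Re(\overline{v'}w') = f_3/2$, and after sign adjustments matching the $\eta$'s we have $\bar v w = \overline{v'} w'$ in $\C[x,y]$. Unique factorization in the UFD $\C[x,y]$ then yields a dichotomy for the linear form $v'$: either it is associate to $v$, in which case $v' = \alpha v$ and $w' = \alpha w$ with $|\alpha| = 1$, exhibiting the desired orthogonal equivalence, or it is associate to $\bar v$, which forces $v \mid w$ in $\C[x,y]$. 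The main technical difficulty is extracting a contradiction from the second alternative: combining $v \mid w$ (hence $\bar v \mid \bar w$ by conjugation) with the identity $\eta + if_3 = 2iv\bar w$ yields $f_2 = v\bar v \mid \eta + if_3$; conjugating gives $f_2 \mid \eta - if_3$; and subtracting the two produces $f_2 \mid 2if_3$, hence $f_2 \mid f_3$, contradicting the hypothesis and ruling out this branch.
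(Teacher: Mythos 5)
Your proposal is correct and follows the same overall architecture as the paper: part (a) is the construction of \ref{rep2xieta}, and part (b) combines the count of $(\xi,\eta)$-solutions from Cor.~\ref{pfproph} (via Prop.~\ref{pfprop}(b)), the realization of each $(\xi^2,\eta^2)$-orbit by the construction of \ref{pfkonstr2}, and the injectivity argument of \ref{eindtfall0}. One small but worthwhile improvement over the paper's wording: in the injectivity step, the paper rules out $\bar v\mid w'$ by deducing $f_2\mid 4w'\bar w'=4f_4-\xi^2$, hence $f_2^2\mid\eta^2+f_3^2$, and then asserts ``whence $f_2\mid f_3$''; strictly read, that last implication still needs a short case analysis of how $v^2\bar v^2$ splits between the factors $\eta\pm if_3$. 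Your version sidesteps this by translating the bad alternative directly into $v\mid w$, noting $\bar v\mid\bar w$ by conjugation, and then reading off $f_2=v\bar v\mid v\bar w$ from the explicit identity $f_3-i\eta=2v\bar w$ (equivalently $\eta+if_3=2iv\bar w$); together with the conjugate this gives $f_2\mid f_3$ immediately. Two minor things worth making explicit when you write this up: first, in the generic case of (b) one should note that $\xi\ne0$ and $\eta\ne0$ for every solution (both would force $4f_2f_4-f_3^2$ to be a square or else force $f_2\mid f_3$), so the $16$ pairs do split into exactly $4$ sign-orbits, not just ``at most four''; second, one should record that $v\ne0$, $w\ne0$ before invoking unique factorization (both follow from $f_2$ not a square and $\eta\ne0$, respectively).
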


\begin{rem}
Let $f=f_2z^2+f_3z+f_4$ be psd, as in Proposition \ref{summreal0}.
The real zero $(0,0,1)$ is a singularity of the projective curve
$f=0$. That $f_2$ is not a square means that this singularity is a
node (with two complex conjugate tangents). When $f_2\nmid f_3$ and
$4f_2f_4-f_3^2$ is square-free, one can show that $(0,0,1)$ is the
only singularity of the curve (the converse is not true). The fact
that $f$ has precisely four inequivalent representations is in
agreement with the results of \cite{sch:jag}.
\end{rem}

%-------------------------------------------------------------------%

\section{The case where $f$ has no real zero}\label{sec:noreal0}

The following normalization lemma was proved in \cite{Pf}:

\begin{lem}\label{normalisform}%
Let $f=f(x,y,z)$ be a strictly positive definite form of degree four
in $\R[x,y,z]$. Then, by a linear change of coordinates, $f$ can be
brought into the form
\begin{equation}\label{normalisf}%
f\>=\>z^4+f_2z^2+f_3z+f_4
\end{equation}
in which $f_j\in\R[x,y]$ is a form of degree $j$ ($j=2,3,4$), and
such that the form $f-z^4$ is psd.
\end{lem}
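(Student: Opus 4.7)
The plan is to produce a nonzero linear form $\ell\in\R[x,y,z]$ together with a vector $v\in\R^3$ satisfying $\ell(v)=f(v)=1$ and $f\ge\ell^4$ pointwise on $\R^3$, and then to choose a linear change of coordinates sending $v$ to $(0,0,1)$ and $\ell$ to $z$. After such a change the coefficient of $z^4$ in $f$ equals $f(0,0,1)=1$, and the inequality $f\ge\ell^4$ becomes precisely the statement that $f-z^4$ is psd. Finally, the psd property of $f-z^4$ automatically forces the $z^3$ coefficient of $f$ to vanish: if $f$ contained a term $g_1(x,y)\,z^3$ with $g_1(x_0,y_0)\ne0$, then the one-variable polynomial $(f-z^4)(x_0,y_0,z)$ would be a genuine cubic in $z$ and hence unbounded below on $\R$, contradicting psd.

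For the construction of $(\ell,v)$, I would fix any nonzero linear form $\ell_0$ and maximise it over the centrally symmetric compact real surface $S:=\{w\in\R^3:f(w)=1\}$, which is compact because $f$ is positive definite and homogeneous of degree four. Let $v\in S$ be a maximiser and set $m:=\ell_0(v)$; since $S$ is symmetric under $w\mapsto-w$ and $\ell_0\not\equiv0$, one has $m>0$. Put $\ell:=m^{-1}\ell_0$, so $\ell(v)=f(v)=1$. For any $w\ne0$ the normalised vector $w/f(w)^{1/4}$ lies in $S$, hence $|\ell_0(w/f(w)^{1/4})|\le m$ (the absolute value coming from the symmetry of $S$), which rearranges to $\ell(w)^4\le f(w)$.

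The remaining coordinate change is formal: since $\ell(v)=1\ne0$ one has $\R^3=\ker\ell\oplus\R v$, and taking any basis of $\ker\ell$ together with $v$ as the third basis vector yields dual coordinates $(x,y,z)$ with $\ell=z$ and $v=(0,0,1)$. I expect the only genuinely delicate point to be the simultaneous normalisation $\ell(v)=f(v)=1$, which is exactly what reconciles the two requirements of the lemma (leading coefficient $1$, and $f-z^4$ psd); the maximiser $v$ of $\ell_0$ on $S$ is what supplies it. Everything else reduces to compactness of $\{f=1\}$ and the symmetry $f(-w)=f(w)$.
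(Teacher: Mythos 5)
Your proof is correct, and it reaches the lemma by a genuinely different route than the paper. The paper minimizes $f$ over the Euclidean unit sphere $S^2$, scales $f$ so that minimum is $1$, applies an orthogonal change so the minimizer is $(0,0,1)$, and then shows $\tilde f:=f-(x^2+y^2+z^2)^2$ is psd, deducing $f-z^4\ge\tilde f\ge0$. You instead maximize a fixed linear form $\ell_0$ over the compact level set $\{f=1\}$ and normalize so that the maximizer $v$ satisfies $\ell(v)=f(v)=1$ and $\ell^4\le f$, then pick a dual basis adapted to $\ker\ell$ and $v$. The two optimization problems are dual in spirit (minimize $f$ with a norm constraint vs.\ maximize a linear functional with an $f$-constraint), and both hinge on compactness, but yours avoids any Euclidean structure and the auxiliary quartic $(x^2+y^2+z^2)^2$, so it is more coordinate-free; the paper's version has the small advantage of producing an orthogonal change (up to a global scaling), which the authors find convenient later but which the lemma does not actually require. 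You also spell out the argument that the $z^3$-coefficient must vanish --- a nonnegative univariate polynomial cannot have odd degree --- which the paper compresses into ``in fact $\deg_z(\tilde f)\le2$''; the reasoning is the same, you just make it explicit.
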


\begin{proof}
Let $c>0$ be the minimum value taken by $f$ on the unit sphere $S^2$
in $\R^3$. Scaling $f$ with a positive factor we may
assume $c=1$, and after an orthogonal coordinate change we get $c=1=
f(0,0,1)$. The form $\tilde f:=f-(x^2+y^2+z^2)^2$ is nonnegative on
$\R^3$ and vanishes at $(0,0,1)$. Therefore $\tilde f$ does not
contain the term $z^4$, in fact $\deg_z(\tilde f)\le2$. This means
that $f$ has the shape \eqref{normalisf}. The last assertion follows
from $f-z^4=\tilde f+(x^2+y^2+z^2)^2-z^4\ge\tilde f\ge0$.
\end{proof}

\begin{rems}\label{findmin}%
1.\
The form $f-z^4$ is psd and vanishes in $(0,0,1)$, so the results of
Sect.~\ref{elemreal0} apply to $f-z^4$. In particular, we can
explicitly construct a representation of $f-z^4$ as a sum of three
squares.
\smallskip

2.\
The minimum value of $f$ on the unit sphere can be found by
inspecting the solutions of
the equation $\nabla f(x,y,z)=\lambda\cdot(x,y,z)$ with $\lambda\in
\R$.
\end{rems}

For $f$ as in \eqref{normalisf} we now study the question when $f$ is
a sum of three squares.

\begin{prop}[\cite{Pf} Prop.\ 3.1]\label{pfthm}%
Let $f=z^4+f_2z^2+f_3z+f_4$ where $f_j\in\R[x,y]$ is a form of
degree~$j$ ($j=2,\,3,\,4$). Then $f$ is a sum of three squares if,
and only if, there exist binary forms $\xi$, $\eta\in\R[x,y]$ with
$\deg(\xi)=2$, $\deg(\eta)=3$ and
\begin{equation}\label{xietaglng}%
\eta^2+f_3^2\>=\>(f_2-\xi)(4f_4-\xi^2),
\end{equation}
such that
\begin{equation}\label{xietaxtra}%
f_2-\xi\ge0,\quad4f_4-\xi^2\ge0.
\end{equation}
\end{prop}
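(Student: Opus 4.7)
The plan is to prove both implications by explicit algebraic manipulation, closely paralleling the arguments of Sections \ref{rep2xieta} and \ref{pfkonstr2} but accounting for the leading $z^4$ term.

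For the forward implication, I start from $f = \sum_{i=1}^3 p_i^2$ and write each quadratic form as $p_i = a_i z^2 + v_i z + w_i$ with $a_i\in\R$, $v_i\in\R[x,y]$ linear and $w_i\in\R[x,y]$ quadratic. Comparing coefficients of $z^k$ on both sides yields the system
\begin{align*}
a_1^2+a_2^2+a_3^2 & = 1, \\
a_1 v_1+a_2 v_2+a_3 v_3 & = 0, \\
2(a_1 w_1+a_2 w_2+a_3 w_3)+v_1^2+v_2^2+v_3^2 & = f_2, \\
v_1 w_1+v_2 w_2+v_3 w_3 & = \tfrac12 f_3, \\
w_1^2+w_2^2+w_3^2 & = f_4.
\end{align*}
Since $(a_1,a_2,a_3)$ is a unit vector, applying a suitable $S\in\mathrm{O}_3(\R)$ to the column $(p_1,p_2,p_3)^t$ (which preserves $\sum p_i^2$) brings it to $(1,0,0)$. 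The second equation then forces $v_1=0$. Setting $\xi := 2w_1$ and $\eta := 2(v_2 w_3-v_3 w_2)$, the inequalities $f_2-\xi = v_2^2+v_3^2\ge0$ and $4f_4-\xi^2 = 4(w_2^2+w_3^2)\ge0$ are immediate, and \eqref{xietaglng} follows from the same identity used in \ref{rep2xieta}, namely
$$\eta^2+f_3^2=4(v_2 w_3-v_3 w_2)^2+4(v_2 w_2+v_3 w_3)^2=4(v_2^2+v_3^2)(w_2^2+w_3^2).$$

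For the reverse implication, I mimic \ref{pfkonstr2}. Assume first that $f_2-\xi\not\equiv 0$ (the degenerate case $f_2-\xi\equiv 0$ forces $\eta=f_3=0$ by \eqref{xietaglng} and one writes $f = (z^2+\xi/2)^2 + (f_4-\xi^2/4)$ directly, with the psd binary quartic $f_4-\xi^2/4$ a sum of two squares of quadratic forms). Since $f_2-\xi\ge 0$ is a psd binary quadratic form, it factors as $f_2-\xi = l_1^2+l_2^2 = (l_1+il_2)(l_1-il_2)$ with real linear forms $l_i$. Rewriting \eqref{xietaglng} as $(\eta+if_3)(\eta-if_3) = (l_1+il_2)(l_1-il_2)(4f_4-\xi^2)$ in $\C[x,y]$ and, possibly after replacing $l_2$ by $-l_2$, assuming $(l_1+il_2)\mid(\eta+if_3)$, multiplication by $l_1-il_2$ shows that the real polynomial $f_2-\xi$ divides the complex polynomial $(\eta l_1+f_3 l_2) + i(f_3 l_1-\eta l_2)$, and hence divides both its real and imaginary parts in $\R[x,y]$. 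Consequently,
$$h_1 := \frac{f_3 l_1-\eta l_2}{f_2-\xi}, \qquad h_2 := \frac{\eta l_1+f_3 l_2}{f_2-\xi}$$
are binary quadratic forms, and a direct calculation (exactly as in \ref{pfkonstr2}) yields $h_1^2+h_2^2 = 4f_4-\xi^2$ and $l_1 h_1+l_2 h_2 = f_3$. Setting $p_1:= z^2+\xi/2$, $p_2 := l_1 z+h_1/2$, $p_3 := l_2 z+h_2/2$ and expanding $p_1^2+p_2^2+p_3^2$ then recovers $f$ on collecting powers of $z$.

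There is no serious obstacle: both directions reduce to routine computations already performed in previous sections. The only points requiring mild care are the reduction in the forward direction to $(a_1,a_2,a_3)=(1,0,0)$ by orthogonal equivalence, and the separate treatment of the degenerate case $f_2-\xi\equiv 0$ in the reverse direction.
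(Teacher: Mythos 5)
Your proof is correct and takes essentially the same approach as the paper: the forward direction uses an orthogonal transformation to normalize the leading coefficients to $(1,0,0)$ and reads off $\xi=2w_1$, $\eta=2(v_2w_3-v_3w_2)$, and the reverse direction factors $f_2-\xi=(l_1+il_2)(l_1-il_2)$, uses divisibility in $\C[x,y]$ to produce real quadratic forms $h_1,h_2$, and assembles the sum of three squares exactly as in the paper (with $v_2,v_3$ in place of your $l_1,l_2$). Your explicit treatment of the degenerate case $f_2-\xi\equiv 0$ also matches the paper's handling of $\xi=f_2$.
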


\begin{rem}\label{rempsdconds}
If one of $f_2-\xi$ and $4f_4-\xi^2$ is psd, then so is the other by
\eqref{xietaglng}, except possibly in the case where $f_2-\xi$ resp.\
$4f_4-\xi^2$ was zero. The latter can happen only if $f_3=0$ and
$\eta=0$. Note that the psd conditions in \eqref{xietaxtra} mean that
the two forms are sums of two squares of linear resp.\ quadratic
forms.
\end{rem}

\begin{proof}[Proof of \ref{pfthm}]
First assume $f=\sum_{i=1}^3(u_iz^2+v_iz+w_i)^2$, where $u_i$, $v_i$,
$w_i\in\R[x,y]$ are forms of respective degrees $0$, $1$,~$2$ ($1\le
i\le3$). The vector $(u_1,u_2,u_3)\in\R^3$ has unit length, so by
changing with an orthogonal real $3\times3$~matrix we can get $u_1=1$
and $u_2=u_3=0$. This implies $v_1=0$, $v_2^2+v_3^2=f_2-2w_1$,
$2(v_2w_2+v_3w_3)=f_3$ and $w_2^2+w_3^2=f_4-w_1^2$. One checks that
\eqref{xietaglng} and \eqref{xietaxtra} are satisfied with
$$\xi=2w_1,\quad\eta=2(v_2w_3-v_3w_2).$$
Conversely assume that $\xi$, $\eta$ satisfy \eqref{xietaglng} and
\eqref{xietaxtra}. If $\xi=f_2$, then $f_3=0$, and by
\eqref{xietaxtra} there are quadratic forms $w_2$, $w_3\in\R[x,y]$
with $f_4-\frac14f_2^2=w_2^2+w_3^2$, so
$$f=z^4+f_2z^2+f_4=\Bigl(z^2+\frac{f_2}2\Bigr)^2+w_2^2+w_3^2.$$
Now assume $\xi\ne f_2$. By \eqref{xietaxtra} there are linear forms
$v_2$, $v_3\in\R[x,y]$ with $f_2-\xi=v_2^2+v_3^2=(v_2+iv_3)
(v_2-iv_3)$ (where $i^2=-1$). From \eqref{xietaglng} we see that the
linear form $v_2+iv_3$ divides one of the two forms $\eta\pm if_3$
(in $\C[x,y]$). Replacing $v_3$ with $-v_3$ if necessary we can
assume $(v_2+iv_3)\mid(\eta+if_3)$. This implies that $f_2-\xi$
divides
$$(\eta+if_3)(v_2-iv_3)\>=\>(f_3v_3+\eta v_2)+i(f_3v_2-\eta v_3).$$
Therefore,
$$\Bigl(z^2+\frac\xi2\Bigr)^2+\Bigl(v_2z+\frac{f_3v_2-\eta v_3}
{2(f_2-\xi)}\Bigr)^2+\Bigl(v_3z+\frac{f_3v_3+\eta v_2}{2(f_2-\xi)}
\Bigr)^2$$
is a sum of three squares in $\R[x,y,z]$. A comparison of the
coefficients shows that this sum is equal to~$f$.
\end{proof}

\begin{rem}
Consider $f=z^4+f_2z^2+f_3z+f_4$ as a monic polynomial in $z$, with
coefficients $f_j\in\R[x,y]$ as in Prop.\ \ref{pfthm}. Equation
\eqref{xietaglng} says $\eta^2=r_f(\xi)$ where
$$r_f(z)\>=\>(f_2-z)(4f_4-z^2)-f_3^2$$
is the cubic resolvent of $f$ with respect to $z$ (see
\ref{discrfquart} below).
\end{rem}

The following lemma follows from the fact that the sum of squares map
$(p_1,p_2,p_3)\mapsto\sum_jp_j^2$ is topologically proper (see (1) of
the introduction). Avoiding this argument we give a direct proof
based on Prop.\ \ref{pfthm}:

\begin{lem}\label{limarg}
Let $f^{(1)},\>f^{(2)},\>\dots$ be a sequence of quartic forms as in
\ref{pfthm} which converges coefficient-wise to a form $f$. If every
$f^{(j)}$ is a sum of three squares, then the same is true for $f$.
\end{lem}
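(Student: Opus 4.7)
My approach has three stages: apply Proposition \ref{pfthm} to each $f^{(j)}$ to extract auxiliary data, use the psd conditions \eqref{xietaxtra} to prove compactness, and then pass to the limit and invoke the converse direction of Proposition \ref{pfthm}. Concretely, for each $j$ I would pick binary forms $\xi^{(j)} \in \R[x,y]$ of degree $2$ and $\eta^{(j)} \in \R[x,y]$ of degree $3$ that satisfy \eqref{xietaglng} and \eqref{xietaxtra} with $f^{(j)}$ in place of $f$, as provided by Proposition \ref{pfthm}.

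The hard part will be to bound the sequence $(\xi^{(j)}, \eta^{(j)})$ in the finite-dimensional real vector space of pairs of binary forms of degrees $2$ and $3$. Write $f = z^4 + f_2 z^2 + f_3 z + f_4$, so that $f^{(j)}_i \to f_i$ coefficient-wise for $i = 2, 3, 4$. The psd inequality $4 f^{(j)}_4 - (\xi^{(j)})^2 \ge 0$ yields the pointwise bound $|\xi^{(j)}(x,y)|^2 \le 4 f^{(j)}_4(x,y)$ on all of $\R^2$, and since $f^{(j)}_4 \to f_4$ coefficient-wise, the right-hand side is uniformly bounded on the compact unit circle $S^1 \subset \R^2$. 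Restriction to $S^1$ is an injective linear map on the three-dimensional space of binary quadratic forms, so uniform boundedness on $S^1$ translates into a bound on the coefficient vector of $\xi^{(j)}$. Substituting this bound back into \eqref{xietaglng}, whose right-hand side is then uniformly bounded on $S^1$, gives the analogous bound on $\eta^{(j)}$ by the same finite-dimensional argument.

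Once boundedness is in place, Bolzano--Weierstrass allows me to extract a subsequence along which $(\xi^{(j)}, \eta^{(j)})$ converges coefficient-wise to a pair $(\xi, \eta)$ of binary forms of the correct degrees. The identity \eqref{xietaglng} survives the limit since both sides are polynomial in the coefficients, and the inequalities \eqref{xietaxtra} survive because pointwise nonnegativity on $\R^2$ is a closed condition. Applying Proposition \ref{pfthm} to $f$ with this pair $(\xi, \eta)$ then exhibits $f$ as a sum of three squares. Aside from the boundedness step, every ingredient is routine; the only genuine worry was whether \eqref{xietaxtra} was strong enough to control $\xi^{(j)}$, and the pointwise bound against $f^{(j)}_4$ resolves this directly.
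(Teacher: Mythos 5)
Your proof is correct and takes essentially the same route as the paper: both extract $(\xi^{(j)},\eta^{(j)})$ from Proposition \ref{pfthm}, observe that the psd condition $4f_4^{(j)}-(\xi^{(j)})^2\ge0$ bounds the $\xi^{(j)}$ and hence (via \eqref{xietaglng}) the $\eta^{(j)}$, take a limit point, and apply Proposition \ref{pfthm} in the converse direction. You merely spell out the boundedness step in more detail (the unit-circle / equivalence-of-norms argument) than the paper's terse statement.
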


\begin{proof}
For every index $j$ there exist forms $\xi^{(j)}$, $\eta^{(j)}\in
\R[x,y]$ satisfying the conditions of Prop.\ \ref{pfthm}. From the
inequality $\bigl(\xi^{(j)}\bigr)^2\le4f_4$ it follows that the
sequence $\xi^{(j)}$ is bounded, and so the sequence $\eta^{(j)}$ is
bounded as well. Hence there exists a limit point $(\xi,\eta)$ of the
sequence $\bigl(\xi^{(j)},\>\eta^{(j)}\bigr)$, and $(\xi,\eta)$
satisfies the conditions of \ref{pfthm} for the form $f$.
\end{proof}

The rest of this section is not needed for our proof of Hilbert's
theorem. Similar as in the case where $f$ has a real zero (Section
\ref{elemreal0}), we try to find all representations of $f$ as a sum
of three squares.

\begin{lem}\label{xietawelldf}
Let $f$ be as in Prop.\ \ref{pfthm}. The construction in the proof of
Prop.\ \ref{pfthm} associates with every representation
\begin{equation}\label{fixtrep}
f\>=\>p_1^2+p_2^2+p_3^2
\end{equation}
a pair $(\xi,\eta)$ which solves \eqref{xietaglng} and
\eqref{xietaxtra}. The form $\xi$ is independent from the choices. In
fact it depends only on the orthogonal equivalence class of the
representation \eqref{fixtrep}.
\end{lem}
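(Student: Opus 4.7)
The construction in the proof of Proposition \ref{pfthm} takes a representation $f=\sum_{i=1}^3(u_iz^2+v_iz+w_i)^2$, observes that $u:=(u_1,u_2,u_3)^t\in\R^3$ has unit length, picks any $S\in O_3(\R)$ with $Su=e_1:=(1,0,0)^t$, and sets $\xi:=2(Sw)_1$, where $w:=(w_1,w_2,w_3)^t$. The lemma asserts that this $\xi$ depends neither on the choice of $S$ nor on the representative of the orthogonal equivalence class of the representation. The plan is to handle these two ambiguities in turn.

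First I would verify independence from the reducing matrix. Any two $S,S'\in O_3(\R)$ with $Su=S'u=e_1$ differ by $S'=MS$ for some $M$ in the stabilizer of $e_1$ under left multiplication on $\R^3$. That stabilizer consists exactly of block-diagonal matrices $\operatorname{diag}(1,T)$ with $T\in O_2(\R)$: indeed, orthogonality forces the first column \emph{and} the first row of $M$ to equal $e_1^t$ once $Me_1=e_1$ is imposed. Such an $M$ fixes the first coordinate of every vector, so $(S'w)_1=(MSw)_1=(Sw)_1$, and $\xi$ is well defined for a given representation.

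Next I would handle invariance under orthogonal equivalence. By Definition \ref{dfnortheq}, an equivalent representation arises by replacing the column $p=(p_1,p_2,p_3)^t$ by $Np$ for some $N\in O_3(\R)$ (take $N$ to be the transpose of the matrix $(s_{ij})$ of the definition). Unwinding the degrees in $z$, this is the same as replacing each of $u,v,w$ by $Nu,Nv,Nw$. If $S$ is a reducing matrix for the original triple, then $SN^{-1}\in O_3(\R)$ is a reducing matrix for the new triple, since $(SN^{-1})(Nu)=Su=e_1$; the new reduced $w$-vector is $(SN^{-1})(Nw)=Sw$, with the same first entry. Combined with the previous paragraph, this shows that $\xi$ depends only on the equivalence class.

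The argument is essentially bookkeeping for the left $O_3(\R)$-action, so no substantive obstacle is expected. The only subtle point, worth remarking on, is \emph{why} the same argument does \emph{not} give well-definedness of the companion form $\eta=2(v_2w_3-v_3w_2)$: under the residual $O_2(\R)$-action on coordinates $2,3$ the determinant $v_2w_3-v_3w_2$ is multiplied by $\det T=\pm1$, so only $\eta^2$ (and not $\eta$ itself) is invariant. This is precisely why the lemma is phrased as a well-definedness statement about $\xi$ alone.
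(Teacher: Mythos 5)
Your proof is correct and follows essentially the same route as the paper's: both reduce to the observation that any two reducing matrices $S,T$ with $Su=Tu=e_1$ differ by left multiplication by an element of the stabilizer of $e_1$ in $O_3(\R)$, which fixes the first coordinate, and both then note that the argument transports unchanged across an orthogonal change of the representation. Your closing remark on why $\eta$ itself (as opposed to $\eta^2$) is not invariant under the residual $O_2(\R)$-action is a useful clarification, though not needed for the lemma.
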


\begin{proof}
Consider the representation \eqref{fixtrep}, and write
$$p_i=u_iz^2+v_iz+w_i\quad(i=1,2,3)$$
where $u_i$, $v_i$, $w_i\in\R[x,y]$ are homogeneous of respective
degrees $0$, $1$ and~$2$. Writing $u=(u_1,u_2,u_3)^t$, $v=(v_1,v_2,
v_3)^t$, $w=(w_1,w_2,w_3)^t$, we choose $S\in\O_3(\R)$ with $Su=
(1,0,0)^t$ as in the proof of Prop.\ \ref{pfthm}. If $Sv=(v'_1,v'_2,
v'_3)^t$ and $Sw=(w'_1,w'_2,w'_3)^t$, we have shown that
$$\xi=2w'_1,\quad\eta=2(v'_2w'_3-v'_3w'_2)$$
solve \eqref{xietaglng} and \eqref{xietaxtra}. If $T$ is another
orthogonal matrix with $Tu=(1,0,0)^t$, then $T=US$ where $U$ is
orthogonal with first column and row $(1,0,0)$. This shows that using
$T$ instead of $S$ does not change $\xi$.
The same argument shows that $\xi$ depends only on the orthogonal
equivalence class of \eqref{fixtrep}.
\end{proof}

\begin{prop}\label{eindtfall1}
Let $f=z^4+f_2z^2+f_3z+f_4$ with $f_j\in\R[x,y]$ homogeneous of
degree~$j$ ($j=2,3,4$), and assume $\gcd(f_3,\,4f_4-f_2^2)=1$. Let
$$f\>=\>\sum_{i=1}^3p_i^2\>=\>\sum_{i=1}^3p_i'^2$$
be two representations of $f$ with associated invariants $\xi$ and
$\xi'$ (see Lemma \ref{xietawelldf}). If $\xi=\xi'$,
the two representations are orthogonally equivalent.
\end{prop}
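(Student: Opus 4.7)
The plan is to mirror the argument of \ref{eindtfall0}. By Lemma \ref{xietawelldf}, after applying orthogonal equivalences within each representation I may assume $p_1 = p_1' = z^2 + \xi/2$; write $p_i = v_i z + w_i$ and $p_i' = v_i' z + w_i'$ ($i=2,3$), with $v_i, v_i' \in \R[x,y]$ linear and $w_i, w_i'$ quadratic. Comparing coefficients of $z^k$ yields
\[
v_2^2 + v_3^2 \;=\; f_2 - \xi \;=\; v_2'^2 + v_3'^2, \quad v_2 w_2 + v_3 w_3 \;=\; \tfrac{f_3}{2} \;=\; v_2' w_2' + v_3' w_3', \quad w_2^2 + w_3^2 \;=\; f_4 - \tfrac{\xi^2}{4} \;=\; w_2'^2 + w_3'^2.
\]
Set $v := v_2 + iv_3$, $w := w_2 + iw_3$, and similarly $v', w'$. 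These identities read $v\ol v = v'\ol{v'} = f_2 - \xi$, $\mathrm{Re}(\ol v w) = \mathrm{Re}(\ol{v'}w') = f_3/2$, and $w\ol w = w'\ol{w'} = f_4 - \xi^2/4$. Both $\eta := 2\,\mathrm{Im}(\ol v w)$ and $\eta' := 2\,\mathrm{Im}(\ol{v'}w')$ satisfy $\eta^2 = r_f(\xi) = \eta'^2$; by flipping the signs of $v_3', w_3'$ if needed (an $\mathrm{O}_2(\R)$-move fixing $p_1'$) we arrange $\eta' = \eta$, giving $\ol v w = \ol{v'}w'$.

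The hypothesis $\gcd(f_3, 4f_4 - f_2^2) = 1$ forces $f_3 \ne 0$, and then $\eta^2 + f_3^2 \ne 0$ gives $f_2 - \xi \ne 0$. So $f_2 - \xi$ is a nonzero psd binary quadratic form, which is either a positive scalar multiple of the square $l^2$ of a real linear form $l$, or strictly positive definite. In the square case $f_2 - \xi = a l^2$ ($a > 0$), a rotation of $(v_2, v_3)$ and simultaneously of $(w_2, w_3)$ normalises to $v_2 = \sqrt a\, l$, $v_3 = 0$; the same normalisation of the primed triple produces identical $v_2, v_3$. The equation $\sqrt a\, l\, w_2 = f_3/2$ then forces $l \mid f_3$ and determines $w_2 = w_2' = f_3/(2\sqrt a\, l)$, while $w_3^2 = w_3'^2 = f_4 - \xi^2/4 - w_2^2$ pins down $w_3$ up to a sign that a final reflection fixes, giving orthogonal equivalence.

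The main case is $f_2 - \xi$ strictly positive definite, so that $v$ and $\ol v$ are non-associate irreducibles of the UFD $\C[x, y]$. Then $v\ol v = v'\ol{v'}$ forces either $v' = \lambda v$ or $v' = \lambda \ol v$ with $|\lambda| = 1$. In the first subcase, $\ol v w = \ol{v'}w'$ yields $w' = \lambda w$; writing $\lambda = e^{i\vartheta}$, the maps $(v_2, v_3) \mapsto (v_2', v_3')$ and $(w_2, w_3) \mapsto (w_2', w_3')$ are the same rotation by $\vartheta$, giving the desired orthogonal equivalence. The second subcase is excluded by the hypothesis: it yields $w' = \lambda \ol v w / v$, and polynomiality of $w'$ combined with $\gcd(v, \ol v) = 1$ forces $v \mid w$, so $w = v t$ with $t \in \C[x, y]$ linear. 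Then $f_3/2 = \mathrm{Re}(\ol v w) = (f_2 - \xi)\,\mathrm{Re}(t)$ shows $(f_2 - \xi) \mid f_3$ in $\R[x,y]$, and $w\ol w = (f_2 - \xi)\, t\ol t$ shows $(f_2 - \xi) \mid (4f_4 - \xi^2)$; together with $4f_4 - f_2^2 = (4f_4 - \xi^2) - (f_2 - \xi)(f_2 + \xi)$, this gives $(f_2 - \xi) \mid \gcd(f_3, 4f_4 - f_2^2)$, contradicting the hypothesis since $f_2 - \xi$ is a nonconstant form.

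The main obstacle I anticipate is the bookkeeping: several sign flips, rotations, and normalisations applied to $(p_2, p_3)$ and $(p_2', p_3')$ must combine into a single element of $\mathrm{O}_3(\R)$ carrying one full triple to the other. Each such move lies in the $\mathrm{O}_2(\R)$-stabiliser of $p_1 = p_1' = z^2 + \xi/2$, so composition stays in $\mathrm{O}_3(\R)$; still, the order of operations and the sign choices (notably the one used to align $\eta$ with $\eta'$) require careful tracking to certify the final equivalence.
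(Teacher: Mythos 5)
Your proof is correct, and it tracks the paper's strategy (complexify via $v=v_2+iv_3$, $w=w_2+iw_3$, normalize so $p_1=p_1'=z^2+\xi/2$, align $\eta=\eta'$ to get $\ol v w=\ol{v'}w'$, then conclude $v'=\lambda v$, $w'=\lambda w$ with $|\lambda|=1$), but the crucial exclusion step is organized differently and, I would argue, more soundly. The paper opens by asserting that $(f_2-\xi)\mid(4f_4-\xi^2)$ would already force $(f_2-\xi)\mid f_3$ via the factorization $\eta^2+f_3^2=(\eta+if_3)(\eta-if_3)$, and later uses the resulting $(f_2-\xi)\nmid(4f_4-\xi^2)$ to rule out $\ol v\mid w'$; but that opening implication is not valid in general (take $\eta+if_3=(x+iy)^3$, so that $v^2\|(\eta+if_3)$ and $\ol v\nmid(\eta+if_3)$, giving $(f_2-\xi)\mid(4f_4-\xi^2)$ while $(f_2-\xi)\nmid f_3$). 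You instead use unique factorization in $\C[x,y]$ directly: $v\ol v=v'\ol{v'}$ forces $v'=\lambda v$ or $v'=\lambda\ol v$, and in the second case you obtain $v\mid w$, hence $w=vt$, and from there you extract \emph{both} divisibilities $(f_2-\xi)\mid f_3$ (from $\mathrm{Re}(\ol vw)$) and $(f_2-\xi)\mid(4f_4-\xi^2)$ (from $w\ol w$) at once, which together contradict $\gcd(f_3,4f_4-f_2^2)=1$. That is a genuine improvement: your contradiction is derived from the hypothesis to be excluded rather than from a separate preliminary claim. Two further differences are cosmetic but helpful: you deduce $f_3\ne0$ from the gcd hypothesis to eliminate the degenerate $v=0$ or $w=0$ cases at the outset (the paper handles these separately), and you treat the case where $f_2-\xi$ is a positive multiple of a square explicitly (the paper covers it implicitly, since then $v'$ is automatically a unit multiple of $v$). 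Your bookkeeping concern at the end is legitimate but harmless: every normalization you apply is an element of the $\mathrm{O}_2(\R)\subset\mathrm{O}_3(\R)$ stabilizing $p_1$, and composing them stays in $\mathrm{O}_3(\R)$, as you note.
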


\begin{proof}
Assuming $f_2-\xi\ne0$, we first show that $f_2-\xi$ does not divide
$4f_4-\xi^2$. From
$$(f_2-\xi)(4f_4-\xi^2)=\eta^2+f_3^2=(\eta+if_3)(\eta-if_3)$$
we see that $(f_2-\xi)\mid(4f_4-\xi^2)$ would imply $(f_2-\xi)\mid
f_3$.
On the other hand, it would imply $(f_2-\xi)\mid(4f_4-f_2^2)$, thus
contradicting the assumption.

Write $p_i=u_iz^2+v_iz+w_i$ and $p'_i=u'_iz^2+v'_iz+w'_i$ ($i=1,2,3$)
as in the proof of Lemma \ref{xietawelldf}.
We can assume $u_1=u'_1=1$ and $u_i=u'_i=0$ for $i=2,3$. By
hypothesis we have $w_1=w'_1=\frac\xi2$ and $v_2w_3-v_3w_2=\pm
(v'_2w'_3-v'_3w'_2)$; replacing $p_3$ with $-p_3$ if necessary we can
assume
\begin{equation}\label{etas}
v_2w_3-v_3w_2\>=\>v'_2w'_3-v'_3w'_2\>=\>\frac\eta2\,.
\end{equation}
Since the coefficient of $z^3$ vanishes in $f$ we have $v_1=v'_1=0$,
and so $p_1=p'_1=z^2+\frac\xi2$. Write $v:=v_2+iv_3$, $w:=w_2+iw_3$
and similarly $v':=v'_2+iv'_3$, $w':=w'_2+iw'_3$. Then \eqref{etas}
says $\Im(\ol vw)=\Im(\ol v'w')=\frac12\eta$. A comparison of the
other coefficients gives $v\ol v=v'\ol v'=f_2-\xi$, $\Re(\ol vw)=
\Re(\ol v'w')=\frac12f_3$ and $4w\ol w=4w'\ol w'=4f_4-\xi^2$.
In particular, $\ol vw=\ol v'w'=\frac12(f_3+i\eta)$.

We clearly have $v=0$ $\iff$ $v'=0$, and similarly $w=0$ $\iff$ $w'=
0$. In either of these cases, it is clear that the two
representations are equivalent. Hence we can assume $v$, $w\ne0$. Now
$\ol v$ does not divide $w'$, because otherwise $v\ol v\mid w'
\ol w'$, i.e., $(f_2-\xi)\mid(4f_4-\xi^2)$, which was ruled out at
the beginning.
So we conclude that there exist $\lambda$, $\mu\in\C$ with $|\lambda|
=|\mu|=1$ and $v'=\lambda v$, $w'=\mu w$. Then $\ol vw=\ol v'w'$
implies $\lambda=\mu$. Hence the two representations are orthogonally
equivalent.
\end{proof}

\begin{cor}\label{eindtfall2}
If $\gcd(f_3,\,4f_4-f_2^2)=1$ the number of inequivalent
representations of $f$ equals the number of forms $\xi$ solving
\eqref{xietaglng} and \eqref{xietaxtra} with suitable $\eta$.
\qed
\end{cor}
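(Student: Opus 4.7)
The plan is to assemble the three preceding results (Proposition \ref{pfthm}, Lemma \ref{xietawelldf}, and Proposition \ref{eindtfall1}) into a bijection between orthogonal equivalence classes of representations and forms $\xi$ satisfying \eqref{xietaglng} and \eqref{xietaxtra}.

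Concretely, I would define a map
$$\Phi\colon\bigl\{\text{representations }f=\textstyle\sum p_i^2\bigr\}\big/\mathrm{equiv.}\ \longrightarrow\ \bigl\{\xi\in\R[x,y]_2\colon\xi\text{ solves \eqref{xietaglng}, \eqref{xietaxtra} for some }\eta\bigr\}$$
by sending an equivalence class of representations to the invariant $\xi$ constructed in the proof of Proposition \ref{pfthm}. Lemma \ref{xietawelldf} shows that $\Phi$ is well-defined: the construction first puts the leading-coefficient vector $u=(u_1,u_2,u_3)$ into the form $(1,0,0)$ by an orthogonal transformation, and the resulting form $\xi=2w'_1$ is independent of the residual orthogonal ambiguity as well as of the choice of representative within the equivalence class.

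Next I would check that $\Phi$ is surjective. Given any $\xi$ satisfying \eqref{xietaglng} and \eqref{xietaxtra} with an accompanying $\eta$, the constructive second half of the proof of Proposition \ref{pfthm} produces an explicit representation $f=p_1^2+p_2^2+p_3^2$ with $p_1=z^2+\tfrac\xi2$ (treating the two sub-cases $\xi=f_2$ and $\xi\ne f_2$ separately). Reading off the leading coefficient $u=(1,0,0)$ and applying the construction of Lemma \ref{xietawelldf} to this particular representation recovers the same $\xi$ we started with, so $\xi$ lies in the image of $\Phi$.

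Finally, injectivity of $\Phi$ under the hypothesis $\gcd(f_3,\,4f_4-f_2^2)=1$ is precisely the content of Proposition \ref{eindtfall1}: two representations of $f$ whose associated invariants coincide must be orthogonally equivalent. Combining these three steps shows that $\Phi$ is a bijection, which gives the desired equality of cardinalities. There is essentially no hard step here, since all the work has already been carried out in the earlier results; the only point that requires a moment's attention is verifying that the recipe of Proposition \ref{pfthm} indeed returns the given $\xi$ when fed $(\xi,\eta)$, and this follows by direct inspection of the coefficient of $z^2$ (after the orthogonal reduction $u_1=1$) in the constructed representation.
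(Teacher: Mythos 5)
Your argument is correct and is precisely the justification the paper intends (the corollary is stated with a bare \qed, relying on exactly the three ingredients you assemble: well-definedness from Lemma \ref{xietawelldf}, surjectivity from the constructive direction of Proposition \ref{pfthm} together with the observation that the constructed representation already has $u=(1,0,0)$ and $w_1=\xi/2$, and injectivity from Proposition \ref{eindtfall1}). Nothing to add.
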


%-------------------------------------------------------------------%

\section{Deforming the quartic, I}\label{sec:defqui}

\begin{lab}\label{normlf}%
Let $f=f(x,y,z)$ be a nonzero psd quartic form with real
coefficients. We are trying to show that $f$ is a sum of three
squares. The case where $f$ has a nontrivial real zero has already
been solved completely. From now on we assume that $f$ is strictly
positive definite. We shall use a deformation to a suitable psd form
with real zero to arrive at the desired conclusion, at least in a
generic case.

As in Lemma \ref{normalisform} we use
scaling by a positive number and an orthogonal coordinate change to
bring $f$ into the form
\begin{equation}\label{normalform}%
f\>=\>z^4+f_2(x,y)z^2+f_3(x,y)z+f_4(x,y)
\end{equation}
with $\deg(f_j)=j$ ($j=2,3,4$), such that the form
$$f-z^4\>=\>f_2(x,y)z^2+f_3(x,y)z+f_4(x,y)$$
is psd. The latter means that each of the binary forms $f_2$, $f_4$
and $4f_2f_4-f_3^2$ is psd.
\end{lab}

\begin{lab}
Let $t$ be a real parameter. Fixing $f$ as in \ref{normlf}, we
consider the family of quartic forms
\begin{align}%\label{ft}%
f^{(t)} & := t^2f+(1-t^2)(f-z^4) \notag \\
& \phantom:= t^2z^4+f_2(x,y)z^2+f_3(x,y)z+f_4(x,y) %\tag{32}
\end{align}\label{ft}%
($t\in\R$). For $0<|t|\le1$, the form $f^{(t)}$ is strictly
positive definite, while $f^{(0)}=f-z^4$ has a zero at $(0,0,1)$.
When $t$ runs from $0$ to $1$, the form $f^{(t)}$ covers the line
segment between $f-z^4$ and $f$ (inside the space of all real
quartic forms). Note however that the time parameter is quadratic,
not linear.
\end{lab}

\begin{lab}\label{ftso3s}%
Let $0\ne t\in\R$. By Prop.\ \ref{pfthm}, $f^{(t)}$ is a sum of three
squares if and only if there are forms $\tilde\xi$, $\tilde\eta\in
\R[x,y]$ with $\tilde\eta^2+t^{-4}f_3^2=(t^{-2}f_2-\tilde\xi)(4t^{-2}
f_4-\tilde\xi^2)$, with both factors on the right psd. Multiplying
with $t^4$ and substituting $\xi=t\tilde\xi$, $\eta=t^2\tilde\eta$,
we see that this happens if and only if there are forms $\xi$, $\eta$
in $\R[x,y]$ (of degrees $2$ resp.~$3$) such that
\begin{equation}\label{xietaglngt1}%
\eta^2+f_3^2\>=\>(f_2-t\xi)(4f_4-\xi^2),
\end{equation}
\begin{equation}\label{xietaglngt2}%
f_2-t\xi\ge0,\ \ 4f_4 -\xi^2\ge0.
\end{equation}
On the other hand, conditions \eqref{xietaglngt1}, \eqref{xietaglngt2}
have a solution $(\xi_0,\eta_0)$ for $t=0$,
provided that $f_2$ is not a square, since $4f_2f_4-f_3^2$ is then
represented by $\lqf1,f_2\rqf$ (Cor.\ \ref{pfproph}, see also
\eqref{pfbeding} above). The condition $4f_4-\xi_0^2\ge0$ is
automatic since $f_2\ge0$ and $f_2\ne0$. Keeping the assumption that
$f_2$ is not a square, let us fix such forms $\xi_0$, $\eta_0$ with
$\deg(\xi_0)=2$, $\deg(\eta_0)=3$ and
\begin{equation}\label{xietat0}%
\eta_0^2+f_2\xi_0^2\>=\>4f_2f_4-f_3^2.
\end{equation}
\end{lab}

\begin{prop}\label{fortsezt0}%
In addition to the assumptions in \ref{normlf}, assume that $f_2$ is
not a square, that $f_2\nmid f_3$ and that $4f_2f_4-f_3^2$ is
square-free. Then there exist continuous families $(\xi^{(t)})$,
$(\eta^{(t)})$ ($|t|<\epsilon$, for some $\epsilon>0$) of forms such
that $(\xi^{(0)},\eta^{(0)})=(\xi_0,\eta_0)$, and such that $(\xi^
{(t)},\eta^{(t)})$ solves \eqref{xietaglngt1}, \eqref{xietaglngt2}
for all $|t|<\epsilon$.
\end{prop}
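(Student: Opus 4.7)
The plan is to apply the implicit function theorem to the polynomial map
$$F(\xi,\eta,t)\;:=\;\eta^2+f_3^2-(f_2-t\xi)(4f_4-\xi^2),$$
regarded as a map from (binary forms of degree $2$) $\times$ (binary forms of degree $3$) $\times\,\R$ into binary forms of degree $6$. The two factors in the source of $(\xi,\eta)$ have dimensions $3$ and $4$, matching the dimension $7$ of the target, and $F(\xi_0,\eta_0,0)=0$ by choice of $\xi_0,\eta_0$. A direct differentiation gives
$$D_{(\xi,\eta)}F(\xi_0,\eta_0,0)\cdot(\delta\xi,\delta\eta)\;=\;2\bigl(\xi_0f_2\,\delta\xi+\eta_0\,\delta\eta\bigr),$$
so once the linear map $L\colon(\delta\xi,\delta\eta)\mapsto\xi_0f_2\,\delta\xi+\eta_0\,\delta\eta$ is shown to be a bijection, the IFT will produce continuous families $\xi^{(t)},\eta^{(t)}$ solving \eqref{xietaglngt1} on a neighborhood $|t|<\epsilon$ of $0$.

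Since source and target of $L$ have the same dimension, injectivity suffices. A kernel element satisfies $\xi_0f_2\,\delta\xi=-\eta_0\,\delta\eta$, and if $\gcd(\xi_0f_2,\eta_0)=1$ in $\R[x,y]$ then $\eta_0\mid\delta\xi$, which forces $\delta\xi=0$ because $\deg\eta_0=3>2=\deg\delta\xi$, and hence $\delta\eta=0$ as well. The crux of the proof is therefore to verify $\gcd(\xi_0f_2,\eta_0)=1$. Both $\xi_0$ and $\eta_0$ are nonzero: if $\xi_0=0$ then \eqref{xietat0} would force $\eta_0^2=4f_2f_4-f_3^2$, making the right-hand side a nontrivial square and contradicting square-freeness; while $\eta_0=0$ would give $f_2\mid f_3^2$ and so $f_2\mid f_3$, contradicting the hypothesis $f_2\nmid f_3$. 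Now let $p$ be any irreducible factor in $\R[x,y]$ of $\gcd(\xi_0f_2,\eta_0)$. Because $f_2$ is psd, nonzero and not a square, it admits no real linear factor (which would force $f_2$ to change sign) and is therefore itself irreducible; hence either $p$ is an associate of $f_2$ or $p\mid\xi_0$. In the first case, the identity $\eta_0^2+f_3^2=f_2(4f_4-\xi_0^2)$ obtained from \eqref{xietat0} yields $f_2\mid f_3^2$, hence $f_2\mid f_3$, contradicting $f_2\nmid f_3$. In the second case, $p^2$ divides $\eta_0^2+f_2\xi_0^2=4f_2f_4-f_3^2$, contradicting square-freeness. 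Therefore $L$ is bijective, and the IFT produces the claimed continuous families.

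The remaining verification of the sign conditions \eqref{xietaglngt2} is then routine: since $f_2$ is psd, nonzero and not a square, it is strictly positive definite on $\R^2\setminus\{0\}$, so $f_2-t\xi^{(t)}$ remains strictly positive (uniformly on the unit circle) for $|t|<\epsilon$ after possibly shrinking $\epsilon$; and then the factored identity $(f_2-t\xi^{(t)})(4f_4-(\xi^{(t)})^2)=(\eta^{(t)})^2+f_3^2\ge0$ forces $4f_4-(\xi^{(t)})^2\ge0$. The main obstacle is the coprimality argument for $\gcd(\xi_0f_2,\eta_0)$, which is precisely the point where all three genericity hypotheses on $f_2,f_3,f_4$ are used; everything else is a routine application of the implicit function theorem combined with continuity.
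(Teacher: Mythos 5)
Your proposal is correct and follows essentially the same route as the paper: apply the implicit function theorem to $F$ and show the linearization at $(\xi_0,\eta_0,0)$ is invertible by proving $\gcd(f_2\xi_0,\eta_0)=1$ from the three genericity hypotheses, then verify the sign conditions by continuity. Your explicit check that $\xi_0\neq0$ and $\eta_0\neq0$ supplies a detail the paper leaves implicit (it is needed for the degree count in the injectivity argument), and the sign in your computed derivative is in fact the correct one --- the paper's displayed formula in this proof has a harmless sign typo.
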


For the proof we need the following simple lemma:

\begin{lem}\label{relprimlem}%
Let $k$ be a field, let $f$, $g\in k[t]$ be polynomials with $\deg(f)
=m$, $\deg(g)=n$ and $m,\,n\ge1$. The linear map
$$k[t]_{m-1}\oplus k[t]_{n-1}\to k[t]_{m+n-1},\quad(p,q)\mapsto
pg+qf$$
is bijective if and only if $f$ and $g$ are relatively prime. (Here
$k[t]_d$ denotes the space of polynomials of degree $\le d$.)
\end{lem}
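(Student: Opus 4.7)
The plan is to exploit the fact that the source and target of the map have the same $k$-dimension, so bijectivity is equivalent to injectivity (or surjectivity), and then decide injectivity by a standard divisibility argument.

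First I would note that $\dim_k k[t]_{m-1} \oplus k[t]_{n-1} = m + n = \dim_k k[t]_{m+n-1}$, so the map is a linear endomorphism of an $(m+n)$-dimensional space after choosing an isomorphism of source with target. Hence it suffices to prove the equivalence of three conditions: bijectivity, injectivity, and $\gcd(f,g)=1$.

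For the implication $\gcd(f,g)=1 \implies$ injectivity, I would argue as follows: suppose $pg + qf = 0$ with $\deg p \le m-1$ and $\deg q \le n-1$. Then $f \mid pg$; since $\gcd(f,g)=1$, this forces $f \mid p$, and from $\deg p < \deg f$ I conclude $p = 0$. Substituting back gives $qf = 0$, hence $q = 0$.

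For the converse, I would show that if $d := \gcd(f,g)$ has $\deg d \ge 1$, then the kernel is nontrivial. Writing $f = d f_1$ and $g = d g_1$, the pair $(p, q) = (f_1, -g_1)$ satisfies $pg + qf = f_1 \cdot d g_1 - g_1 \cdot d f_1 = 0$, and the degree bounds $\deg f_1 = m - \deg d \le m-1$ and $\deg g_1 \le n-1$ place $(p,q)$ in the source. Since $f_1 \ne 0$, the kernel is nonzero. This completes the equivalence.

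There is no real obstacle here; the only thing to be careful about is the dimension bookkeeping ($\dim k[t]_d = d+1$) that underwrites the reduction from bijectivity to injectivity, and the observation that $\deg d \ge 1$ is exactly what makes $(f_1, -g_1)$ land inside the source rather than just outside it.
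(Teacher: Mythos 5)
Your proof is correct and follows essentially the same route as the paper's: equate dimensions to reduce bijectivity to injectivity, use the divisibility argument $f\mid pg$, $\gcd(f,g)=1\Rightarrow f\mid p$, and degree bounds for the forward direction. The only difference is that the paper dismisses the converse as obvious, whereas you explicitly exhibit the kernel element $(f_1,-g_1)$ when $\gcd(f,g)$ has positive degree -- a harmless and slightly more complete presentation of the same idea.
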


\begin{proof}
Both the source and the target vector space have the same dimension
$m+n$. If $f$ and $g$ are relatively prime, then $pg+qf=0$ implies
$f\mid p$ and $g\mid q$, whence $p=q=0$ by degree reasons.
The reverse implication is obvious.
\end{proof}

Note that if one uses the canonical linear bases to describe the map
in \ref{relprimlem} by a matrix, and takes its determinant, one
obtains the resultant of $f$ and~$g$.

\begin{proof}[Proof of Prop.\ \ref{fortsezt0}]
We first exploit the assumption. The forms $\xi_0$ and $\eta_0$ are
relatively prime since the square of any common divisor divides
$4f_2f_4-f_3^2$ by \eqref{xietat0}. Also, the irreducible form $f_2$
does not divide $\eta_0$, since otherwise \eqref{xietat0} would imply
$f_2\mid f_3$. We conclude that $f_2\xi_0$ and $\eta_0$ are relatively
prime.

Let $V_d\subset\R[x,y]$ denote the space of binary forms of
degree~$d$, and consider the map
$$F\colon V_2\times V_3\times\R\to V_6,\quad(\xi,\eta,t)\>\mapsto\>
\eta^2+f_3^2-(f_2-t\xi)(4f_4-\xi^2).$$
The partial derivative of $F$ at $(\xi_0,\eta_0,0)$ with respect to
$(\xi,\eta)$ is the linear map
$$V_2\oplus V_3\to V_6,\quad(\xi,\eta)\mapsto2(\eta_0\eta-f_2\xi_0
\xi).$$
By Lemma \ref{relprimlem}, this map is bijective.

The theorem on implicit functions gives us therefore the existence of
continuous families $(\xi^{(t)})$, $(\eta^{(t)})$, for $|t|<
\epsilon'$ and some $\epsilon'>0$, with $(\xi^{(0)},\eta^{(0)})=
(\xi_0,\eta_0)$ and with $F(\xi^{(t)},\eta^{(t)},\,t)=0$ (that is,
\eqref{xietaglngt1}) for $|t|<\epsilon'$. As for conditions
\eqref{xietaglngt2}, it suffices to verify the first of them since
$f_3\ne0$. For $t=0$, $f_2-t\xi^{(t)}=f_2$ is strictly positive
definite by assumption. Hence there is some $\epsilon''>0$ such that
$f_2-t\xi^{(t)}\ge0$ for all $|t|<\epsilon''$, and we can take
$\epsilon=\min\{\epsilon',\epsilon''\}$.
\end{proof}

Using Prop.\ \ref{pfthm}, we conclude from Prop.\ \ref{fortsezt0}:

\begin{cor}\label{deform1}%
Assume that $f=z^4+f_2z^2+f_3z+f_4$ (with $f_j\in\R[x,y]$ and $\deg
(f_j)=j$) is strictly positive definite and satisfies $f-z^4\ge0$. If
$f_2$ is not a square, $f_2\nmid f_3$ and $4f_2f_4-f_3^2$ is
square-free, then there exists $\epsilon>0$ such that $f^{(t)}$ is a
sum of three squares for all $0\le|t|<\epsilon$.
\end{cor}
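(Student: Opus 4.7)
The plan is essentially to assemble the two main ingredients already at hand: Prop.\ \ref{fortsezt0}, which produces a continuous family of solutions to \eqref{xietaglngt1}, \eqref{xietaglngt2} in a neighborhood of $t=0$, and Prop.\ \ref{pfthm}, which translates such solutions into explicit representations as sums of three squares. The reduction in \ref{ftso3s} is the bridge between them.

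First I would apply Prop.\ \ref{fortsezt0} directly. Its hypotheses ($f_2$ is not a square, $f_2\nmid f_3$, and $4f_2f_4-f_3^2$ is square-free) match those of the corollary, and the auxiliary fact that $\eta_0$, $f_2\xi_0$ exist and are relatively prime is part of its conclusion. This yields an $\epsilon>0$ and continuous families $(\xi^{(t)})$, $(\eta^{(t)})$ defined on $|t|<\epsilon$ which solve \eqref{xietaglngt1} and \eqref{xietaglngt2} simultaneously, and with $(\xi^{(0)},\eta^{(0)})=(\xi_0,\eta_0)$ the pair satisfying \eqref{xietat0}.

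Next, for $t\ne 0$ with $|t|<\epsilon$, I would undo the scaling carried out in \ref{ftso3s}. Setting $\tilde\xi:=\xi^{(t)}/t$ and $\tilde\eta:=\eta^{(t)}/t^2$, the identity \eqref{xietaglngt1} becomes
\[
\tilde\eta^2+t^{-4}f_3^2\>=\>\bigl(t^{-2}f_2-\tilde\xi\bigr)\bigl(4t^{-2}f_4-\tilde\xi^2\bigr),
\]
and the psd conditions \eqref{xietaglngt2} give $t^{-2}f_2-\tilde\xi\ge0$ and $4t^{-2}f_4-\tilde\xi^2\ge0$. These are precisely conditions \eqref{xietaglng}, \eqref{xietaxtra} of Prop.\ \ref{pfthm} applied to the form $f^{(t)}=t^2z^4+f_2z^2+f_3z+f_4$ (after the harmless rescaling by $t^{-2}$, replacing $z$ by $z/t$ in the normalization used there). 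Hence by Prop.\ \ref{pfthm}, $f^{(t)}$ is a sum of three squares of quadratic forms for each $0<|t|<\epsilon$.

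Finally, the boundary case $t=0$ is already handled separately: $f^{(0)}=f-z^4$ has the real zero $(0,0,1)$, so Prop.\ \ref{exrep} gives a representation as a sum of three squares. Combining all cases completes the proof. There is no real obstacle here beyond the bookkeeping of the rescaling; the substantive work is done in Prop.\ \ref{fortsezt0}, whose implicit function step rests on the relative primality of $f_2\xi_0$ and $\eta_0$ (Lemma \ref{relprimlem}), and the squarefreeness hypothesis on $4f_2f_4-f_3^2$ is exactly what guarantees this relative primality.
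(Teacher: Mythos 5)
Your proof is correct and follows the same route as the paper, which dispatches the corollary in one line: ``Using Prop.\ \ref{pfthm}, we conclude from Prop.\ \ref{fortsezt0}''. You spell out the details of the bookkeeping translation from \ref{ftso3s} correctly, though the parenthetical ``replacing $z$ by $z/t$'' is a slip---the normalization that produces the stated equation is simply dividing $f^{(t)}$ by the scalar $t^{2}$ so that its $z^4$-coefficient becomes $1$, which is what your subsequent substitution $\tilde\xi=\xi^{(t)}/t$, $\tilde\eta=\eta^{(t)}/t^2$ in fact implements.
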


\begin{rems}\label{contvsanal}
1.\
It can be shown that a representation of $f^{(t)}$ as a sum of three
squares can be chosen for every $|t|<\epsilon$ such that the
polynomials in this representation depend continuously on $t$,
starting at $t=0$ with an arbitrary representation of $f^{(0)}=
f-z^4$.
\smallskip

2.\
Since the map $F$ in the proof of \ref{fortsezt0} is polynomial, a
suitable version of the implicit function theorem (see \cite{D}
10.2.4, for example)
shows that the families $(\xi^{(t)})$, $(\eta^{(t)})$ are not just
continuous but even analytic.
\end{rems}

%-------------------------------------------------------------------%

\section{The discriminant}\label{sec:disc}

Before proceeding to extend representations of $f^{(t)}$ over the
entire interval $0\le t\le1$, we need to discuss the discriminant of
$f^{(t)}$.

\begin{lab}\label{discrappel}
Here are some reminders about the classical discriminant. Let $K$ be
a field, let
$$f\>=\>a_0z^n+a_1z^{n-1}+\cdots+a_n\ \in K[z]$$
with $a_0\ne0$. The discriminant of $f$ is defined as
$$\disc(f)\>=\>\disc_n(f)\>=\>a_0^{2n-2}\prod_{i<j}(\alpha_i
-\alpha_j)^2$$
if $\alpha_1,\dots,\alpha_n$ are the roots of $f$ in an algebraic
closure of $K$. More precisely, this is the $n$-discriminant of $f$;
if $\deg(f)=m<n-1$ then $\disc_n(f)=0$, while in general $\disc_m(f)
\ne0$. If $\deg(f)=n$ then it follows directly from the definition
that $\disc_n(f)=0$ if and only if $f$ has a multiple root.

Using the theorem on symmetric functions one sees that $\disc_n(f)$
is an integral polynomial in the coefficients $a_0,\dots,a_n$ of $f$.
Moreover, there exist universal polynomials $p$, $q\in\Z[a_0,\dots,
a_n,z]$ such that
$$\disc_n(f)\>=\>pf+qf'$$
where $f'$ is the derivative of $f$. One finds $p$ and $q$ by writing
$f$ with indeterminate coefficients and performing the Euclidean
algorithm on $f$ and $f'$.

Directly from the definition one sees that the polynomial $f(\lambda
z)$ has discriminant
\begin{equation}\label{discrvarsubst}
\disc_nf(\lambda z)\>=\>\lambda^{n(n-1)}\,\disc_nf(z),
\end{equation}
if $\lambda\in K$ is a parameter.
\end{lab}

In the remainder of this section the degree $n$ is always clear from
the context, and we omit the index $n$ of the discriminant.

\begin{lab}\label{discrfquart}
Given a quartic polynomial
$$f(z)\>=\>a_0z^4+a_1z^3+a_2z^2+a_3z+a_4,$$
the cubic resolvent $r_f(z)$ of $f(z)$ is defined to be the cubic
polynomial
$$r_f(z)\>=\>a_0^3z^3-a_0^2a_2z^2+a_0(a_1a_3-4a_0a_4)z+(4a_0a_2a_4
-a_0a_3^2-a_1^2a_4).$$
If $a_0\ne0$ and $\alpha_1,\dots,\alpha_4$ are the roots of $f(z)$ in
an algebraic closure of $K$, a calculation with symmetric polynomials
shows that $r_f(z)$ has the roots
$$\beta_1=\alpha_1\alpha_2+\alpha_3\alpha_4,\quad\beta_2=\alpha_1
\alpha_3+\alpha_2\alpha_4,\quad\beta_3=\alpha_1\alpha_4+\alpha_2
\alpha_3.$$
We will only use the case where the cubic coefficient $a_1$ of $f$
vanishes, in which
$$\disc(f)\>=\>a_0\bigl(-4a_2^3a_3^2-27a_0a_3^4+16a_2^4a_4-128a_0
a_2^2a_4^2+144a_0a_2a_3^2a_4+256a_0^2a_4^3\bigr)$$
and
$$r_f(z)\>=\>a_0\Bigl((a_0z-a_2)(a_0z^2-4a_4)-a_3^2\Bigr).$$
\end{lab}

\begin{lem}\label{discrfr}
Let $f=a_0z^4+a_1z^3+a_2z^2+a_3z+a_4$. Then
$$\disc r_f(z)\>=\>a_0^6\,\disc f(z).$$
\end{lem}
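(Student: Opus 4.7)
The plan is to prove this as an identity of polynomials in $\Z[a_0,\dots,a_4]$, which reduces (by density) to verifying it on the open set where $a_0 \ne 0$ and $f$ has distinct roots $\alpha_1,\dots,\alpha_4$ in an algebraic closure. Paragraph \ref{discrfquart} has already recorded that in this situation the three roots of $r_f(z)$ are
\[
\beta_1=\alpha_1\alpha_2+\alpha_3\alpha_4,\quad \beta_2=\alpha_1\alpha_3+\alpha_2\alpha_4,\quad \beta_3=\alpha_1\alpha_4+\alpha_2\alpha_3,
\]
so I would not reprove that here but simply take it as given.

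The core of the proof is a two-line symmetric-function computation of the differences $\beta_i-\beta_j$. Directly from the definitions one finds
\[
\beta_1-\beta_2=(\alpha_1-\alpha_4)(\alpha_2-\alpha_3),\ \ \beta_1-\beta_3=(\alpha_1-\alpha_3)(\alpha_2-\alpha_4),\ \ \beta_2-\beta_3=(\alpha_1-\alpha_2)(\alpha_3-\alpha_4).
\]
The six factors on the right are, taken together, exactly the six differences $\alpha_i-\alpha_j$ for $i<j$, each appearing once. Squaring and multiplying,
\[
\prod_{1\le i<j\le 3}(\beta_i-\beta_j)^2\>=\>\prod_{1\le i<j\le 4}(\alpha_i-\alpha_j)^2.
\]

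Now I apply the definitions of the two discriminants as given in \ref{discrappel}. Since $r_f(z)$ has degree $3$ and leading coefficient $a_0^3$,
\[
\disc r_f\>=\>(a_0^3)^{2\cdot 3-2}\prod_{i<j}(\beta_i-\beta_j)^2\>=\>a_0^{12}\prod_{i<j}(\alpha_i-\alpha_j)^2,
\]
while $\disc f=a_0^{2\cdot 4-2}\prod_{i<j}(\alpha_i-\alpha_j)^2=a_0^{6}\prod_{i<j}(\alpha_i-\alpha_j)^2$. Dividing gives $\disc r_f=a_0^6\disc f$ on the dense set $a_0\ne 0$, and hence everywhere by polynomial identity.

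There is no real obstacle here; the entire argument is the observation that the three differences $\beta_i-\beta_j$ factor nicely into the six differences $\alpha_i-\alpha_j$. The only small bookkeeping point to keep in mind is the correct power of the leading coefficient in the definition $\disc_n(f)=a_0^{2n-2}\prod(\alpha_i-\alpha_j)^2$, which for $r_f$ means raising $a_0^3$ to the fourth power.
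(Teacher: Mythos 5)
Your proof is correct and is essentially the same as the paper's: both rest on the factorizations $\beta_1-\beta_2=(\alpha_1-\alpha_4)(\alpha_2-\alpha_3)$, etc., together with the observation that the six linear factors appearing are exactly the six differences $\alpha_i-\alpha_j$, after which the power-of-$a_0$ bookkeeping gives the result. The only difference is that you spell out the density argument and the exponent count a bit more explicitly than the paper does.
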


\begin{proof}
If $\beta_1$, $\beta_2$, $\beta_3$
are the roots of $r_f$ as in \ref{discrfquart}, then
\begin{align*}
\beta_1-\beta_2=(\alpha_1-\alpha_4)(\alpha_2-\alpha_3), \\
\beta_1-\beta_3=(\alpha_1-\alpha_3)(\alpha_2-\alpha_4), \\
\beta_2-\beta_3=(\alpha_1-\alpha_2)(\alpha_3-\alpha_4),
\end{align*}
from which one immediately sees
$$\disc(r_f)=a_0^{12}\prod_{1\le k<l\le3}(\beta_k-\beta_l)^2=a_0^{12}
\prod_{1\le i<j\le4}(\alpha_i-\alpha_j)^2=a_0^6\disc(f).$$
\end{proof}

\begin{rem}\label{disc0posdef}%
If $A=\R$, and if a quartic polynomial $f(z)=a_0z^4+a_2z^2+a_3z+a_4
\in\R[z]$ with $a_0\ne0$ is known to be strictly positive definite,
then $f$ can have a multiple root only if $f$ is a square. Therefore,
$\disc(f)=0$ is equivalent to $a_3=a_2^2-4a_0a_4=0$ in this case.
\end{rem}

\begin{lab}\label{dfngtht}
Now let $t\ne0$ be a real parameter. We consider
$$f^{(t)}\>=\>t^2z^4+f_2(x,y)z^2+f_3(x,y)z+f_4(x,y)$$
as a quartic polynomial in the variable $z$ over $\R[x,y]$ (see
\eqref{ft}). Let $r^{(t)}$ be the cubic resolvent of $f^{(t)}$ (with
respect to $z$). We put
$$g_t(z)\>:=\>\frac1{t^2}\cdot r^{(t)}\Bigl(\frac zt\Bigr)\>=\>
(tz-f_2)(z^2-4f_4)-f_3^2,$$
and we define
$$D_t\>:=\>\disc g_t(z)\ \in\R[x,y].$$
Using \eqref{discrvarsubst} and Lemma \ref{discrfr} we find
$$D_t\>=\>\disc g_t(z)\>=\>t^{-2}\,\disc f^{(t)}(z).$$
Explicitly, this gives
\begin{align*}
D_t & := -4f_2^3f_3^2-27t^2f_3^4+16f_2^4f_4-128t^2f_2^2f_4^2+144
  t^2f_2f_3^2f_4+256t^4f_4^3 \\
& \phantom:= 16f_4(4t^2f_4-f_2^2)^2+4f_2f_3^2(36t^2f_4-f_2^2)
  -27t^2f_3^4
\end{align*}
(a form of degree~$12$ in $x$ and $y$). We further put
\begin{equation}\label{dfnht}
h_t(z)\>=\>\frac\partial{\partial z}g_t(z)\>=\>3tz^2-2f_2z-4tf_4
\end{equation}
and conclude:
\end{lab}

\begin{lem}\label{dtinideal}
$D_t$ lies in the ideal generated by $g_t$ and $h_t$ in $\R[x,y,z]$.
\qed
\end{lem}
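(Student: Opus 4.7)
The plan is to observe that this lemma is essentially a direct instance of a general fact about discriminants that was already recalled in \ref{discrappel}, where it was noted that for $f(z) = a_0 z^n + \cdots + a_n$ with indeterminate coefficients there exist universal polynomials $p,q \in \Z[a_0,\dots,a_n,z]$ (produced by running the Euclidean algorithm on $f$ and $f'$) satisfying the identity $\disc_n(f) = p\cdot f + q\cdot f'$. My strategy is just to specialize this identity to the cubic $g_t(z)$ viewed as a polynomial in $z$ over the coefficient ring $\R[x,y]$.

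Concretely, I would write $g_t(z) = tz^3 - f_2\,z^2 - 4tf_4\,z + (4f_2f_4 - f_3^2)$, so that its $z$-coefficients $a_0=t$, $a_1=-f_2$, $a_2=-4tf_4$, $a_3=4f_2f_4-f_3^2$ are specific elements of $\R[x,y]$. Substituting these into the universal polynomials $p,q \in \Z[a_0,a_1,a_2,a_3,z]$ supplied by \ref{discrappel} (with $n=3$) produces concrete elements $P,Q \in \R[x,y,z]$ for which
$$D_t \;=\; \disc g_t(z) \;=\; P\cdot g_t(z) + Q\cdot g_t'(z).$$
By definition \eqref{dfnht} we have $h_t = g_t'$, so this reads $D_t = P\cdot g_t + Q\cdot h_t$, which is exactly the claim that $D_t$ belongs to the ideal generated by $g_t$ and $h_t$ in $\R[x,y,z]$.

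There is no real obstacle: the lemma is a one-line consequence of the universal identity, and the argument does not require knowing $p$ and $q$ explicitly (or that $t\ne0$). If one preferred to produce $P,Q$ concretely rather than quote the universal identity, one could simply carry out the Euclidean algorithm on $g_t(z)$ and $h_t(z)$ in the polynomial ring $\R[x,y]\bigl[\tfrac1t\bigr][z]$ and clear denominators; but this routine calculation is unnecessary for the lemma as stated.
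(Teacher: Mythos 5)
Your proof is correct and is exactly what the paper intends: the lemma carries a bare \qed precisely because it follows at once from the universal identity $\disc_n(f)=pf+qf'$ recalled in \ref{discrappel}, specialized to $g_t$, together with the fact $h_t=\partial g_t/\partial z$ noted in \ref{dfngtht}. (Your parenthetical alternative via the Euclidean algorithm over $\R[x,y]\bigl[\tfrac1t\bigr][z]$ with subsequent clearing of denominators would, strictly speaking, only exhibit $c\cdot D_t\in(g_t,h_t)$ for some $c$, but since you correctly flag it as unnecessary this does not affect the argument.)
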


%-------------------------------------------------------------------%

\section{Deforming the quartic, II: Case of a linear factor}%
\label{sec:defquii}

\begin{lab}
For $t\in\R$ we continue to consider the form
$$f^{(t)}\>=\>t^2z^4+f_2(x,y)z^2+f_3(x,y)z+f_4(x,y),$$
see \eqref{ft}. We know that $f^{(t)}$ is strictly positive definite
for $0<|t|\le1$, and that $f^{(t)}$ is a sum of three squares for
small $|t|$ (Prop.\ \ref{fortsezt0}).

Let $t_0\ne0$ be a fixed real number, and assume that the form
$f^{(t_0)}$ is strictly positive definite and a sum of three squares.
Under generic assumptions on $f$ \emph{which do not depend on $t_0$},
we shall show that $f^{(t)}$ is a sum of three squares for all $t$
sufficiently close to $t_0$.
\end{lab}

\begin{lab}
That $f^{(t_0)}$ is a sum of three squares means the following, by
\ref{ftso3s}: There exist forms $\xi_0$, $\eta_0\in\R[x,y]$ with
$\deg(\xi_0)=2$, $\deg(\eta_0)=3$ such that
\begin{equation}\label{t0gleichung}%
\eta_0^2\>=\>(f_2-t_0\xi_0)(4f_4-\xi_0^2)-f_3^2\>=\>g_{t_0}(\xi_0),
\end{equation}
and
\begin{equation}\label{t0psdbeding}%
f_2-t_0\xi_0\ge0,\quad4f_4-\xi_0^2\ge0.
\end{equation}
For $d\ge0$ we let again $V_d$ denote the vector space of forms of
degree $d$ in $\R[x,y]$. As in the proof of Prop.\ \ref{fortsezt0} we
consider the map $F\colon V_2\times V_3\times\R\to V_6$,
$$F(\xi,\eta,t)\>=\>\eta^2+f_3^2-(f_2-t\xi)(4f_4-\xi^2)\>=\>\eta^2
-g_t(\xi)$$
(see \ref{dfngtht} for $g_t(\xi)$).
The partial derivative of $F$ in $(\xi_0,\eta_0,t_0)$ with respect to
$(\xi,\eta)$ is the linear map
$$V_2\oplus V_3\to V_6,\quad(\xi,\eta)\>\mapsto\>2\eta_0\cdot\eta
-h_{t_0}(\xi_0)\cdot\xi$$
where
$$h_{t_0}(\xi_0)\>=\>3t_0\xi_0^2-2f_2\xi_0-4t_0f_4,$$
c.\,f.\ \ref{dfngtht}.
\end{lab}

\begin{prop}\label{extenduponasspt}
Assume that the two forms $\eta_0\in V_3$ and $h_{t_0}(\xi_0)\in V_4$
are relatively prime, and that $f_3\ne0$. Then there exist $\epsilon
>0$ and solutions $(\xi_t,\eta_t)$ to \eqref{xietaglngt1} and
\eqref{xietaglngt2} for $|t-t_0|<\epsilon$ such that $(\xi_{t_0},
\eta_{t_0})=(\xi_0,\eta_0)$.
\end{prop}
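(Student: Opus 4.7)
The plan is to mirror the proof of Proposition \ref{fortsezt0}, applying the implicit function theorem to the polynomial map
$$F\colon V_2\times V_3\times\R\to V_6,\quad F(\xi,\eta,t)=\eta^2+f_3^2-(f_2-t\xi)(4f_4-\xi^2),$$
now at the point $(\xi_0,\eta_0,t_0)$ instead of at $(\xi_0,\eta_0,0)$. As already computed, the partial derivative at this point with respect to $(\xi,\eta)$ is
$$L\colon V_2\oplus V_3\to V_6,\quad (\xi,\eta)\mapsto 2\eta_0\eta-h_{t_0}(\xi_0)\xi,$$
a linear map between two real vector spaces of dimension~$7$.

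The first step is the bijectivity of $L$, which I would deduce from a binary-form analogue of Lemma \ref{relprimlem}: for binary forms $g\in V_n$ and $f\in V_m$ with $m,n\ge 1$, the map $V_{m-1}\oplus V_{n-1}\to V_{m+n-1}$, $(p,q)\mapsto pg+qf$, is an isomorphism iff $f$ and $g$ are coprime. The proof is word-for-word the same as in Lemma \ref{relprimlem}: both sides have dimension $m+n$, and any kernel element forces $f\mid p$ and $g\mid q$, which is impossible by degrees unless $p=q=0$. Applied to $f=h_{t_0}(\xi_0)\in V_4$ and $g=2\eta_0\in V_3$, the coprimality hypothesis gives the bijectivity of $L$, and the implicit function theorem then produces analytic families $\xi_t\in V_2$, $\eta_t\in V_3$ with $(\xi_{t_0},\eta_{t_0})=(\xi_0,\eta_0)$ satisfying \eqref{xietaglngt1} on some interval $|t-t_0|<\epsilon'$.

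The main obstacle is verifying that the psd conditions \eqref{xietaglngt2} persist, since (unlike in Proposition \ref{fortsezt0}) $f_2-t_0\xi_0$ need not be strictly positive definite. By Remark \ref{rempsdconds} together with $f_3\ne0$, it suffices to check $f_2-t\xi_t\ge0$. If $f_2-t_0\xi_0$ is strictly positive definite, continuity on the compact unit circle handles the claim. The genuine linear factor case indicated by the section title is when $f_2-t_0\xi_0=c\ell^2$ for some real linear form $\ell$ and constant $c>0$. Evaluating $(f_2-t_0\xi_0)(4f_4-\xi_0^2)=\eta_0^2+f_3^2$ at the zero $p$ of $\ell$ forces $\eta_0(p)=f_3(p)=0$, hence $\ell\mid\eta_0$. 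A direct substitution gives
$$h_{t_0}(\xi_0)(p)=t_0\bigl(\xi_0(p)^2-4f_4(p)\bigr),$$
so $h_{t_0}(\xi_0)(p)=0$ would be equivalent to $(4f_4-\xi_0^2)(p)=0$, which together with $\ell\mid\eta_0$ would yield $\ell\mid\gcd(\eta_0,h_{t_0}(\xi_0))$, contradicting the coprimality hypothesis. Therefore $(4f_4-\xi_0^2)(p)>0$; by continuity $4f_4-\xi_t^2>0$ on a neighborhood $U$ of $\{\pm p\}$ in $S^1$ for $|t-t_0|$ small, and the identity $(f_2-t\xi_t)(4f_4-\xi_t^2)=\eta_t^2+f_3^2\ge0$ then yields $f_2-t\xi_t\ge0$ on $U$. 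On the compact complement $S^1\setminus U$ the form $f_2-t_0\xi_0$ has a positive lower bound, so $f_2-t\xi_t>0$ there for $|t-t_0|$ small. Shrinking $\epsilon'$ accordingly yields the required~$\epsilon$.
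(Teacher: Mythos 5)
Your proof is correct, and it takes a genuinely different route from the paper's in the key step where the psd conditions \eqref{xietaglngt2} must be shown to persist. The initial application of Lemma \ref{relprimlem} and the implicit function theorem at $(\xi_0,\eta_0,t_0)$ is identical to the paper's (up to your welcome care in noting that one is implicitly invoking a binary-form version of the lemma). Where you diverge is the case $f_2-t_0\xi_0 = c\ell^2$: you exploit the coprimality hypothesis a second time, evaluating the identity at the zero $p$ of $\ell$ to force $\ell\mid\eta_0$, then noting that $h_{t_0}(\xi_0)(p)=-t_0\bigl(4f_4(p)-\xi_0(p)^2\bigr)$ so that $h_{t_0}(\xi_0)(p)=0$ would contradict coprimality; hence $(4f_4-\xi_0^2)(p)>0$, and a standard neighborhood-plus-compact-complement argument on $S^1$ finishes the job. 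The paper instead argues that if $f_2-t\xi_t$ became indefinite with distinct real roots $\alpha_t<\beta_t$ for $t$ arbitrarily close to $t_0$, then evaluating the identity at those roots would force $\alpha_t,\beta_t$ to be (distinct) roots of the fixed polynomial $f_3$, yet both must converge to the double root of $f_2-t_0\xi_0$ as $t\to t_0$, a contradiction. The paper's argument is slightly slicker and uses only $f_3\ne0$ in this step, whereas yours uses the coprimality hypothesis more heavily; both are elementary and valid, and yours has the conceptual advantage of showing that the coprimality of $\eta_0$ and $h_{t_0}(\xi_0)$ is exactly what prevents the boundary case from degenerating.
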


\begin{proof}
Indeed, by applying Lemma \ref{relprimlem} as in the proof of Prop.\
\ref{fortsezt0}, it follows from the theorem on implicit functions
that there are $(\xi_t,\eta_t)$ depending continuously (in fact
analytically, see \ref{contvsanal}) on $t$ and satisfying
$(\xi_{t_0},\eta_{t_0})=(\xi_0,\eta_0)$ and $F(\xi_t,\eta_t,t)=0$,
for $|t-t_0|<\epsilon'$ (with suitable $\epsilon'>0$). So equations
\eqref{xietaglngt1} hold for $|t-t_0|<\epsilon'$. We claim that
conditions \eqref{xietaglngt2} hold as well for suitable $0<\epsilon
\le\epsilon'$. Indeed, since $f_3\ne0$, this is clear if $f_2-t_0
\xi_0$ is strictly positive, see Remark \ref{rempsdconds}. If $f_2
-t_0\xi_0$ is a square, it could a~priori happen that the quadratic
form $f_2-t\xi_t$ is indefinite for $t$ arbitrarily close to $t_0$,
say with real zeros $\alpha_t<\beta_t$. However, since $(f_2-t\xi_t)
(4f_4-\xi_t^2)=f_3^2+\eta_t^2$, this would imply that $\alpha_t$ and
$\beta_t$ are roots of $f_3$ for all these $t$, which is evidently
impossible.
\end{proof}

\begin{lab}
It remains to show, \emph{under suitable generic assumptions on $f$},
that the following is true:
\begin{quote}
\emph{For every real number $t\ne0$ such that $f^{(t)}$ is positive
definite, and for every solution $(\xi,\eta)$ of \eqref{xietaglngt1}
and \eqref{xietaglngt2}, the two forms $\eta$ and $h_t(\xi)$ are
relatively prime.}
\end{quote}
To analyze the problem, assume that $\eta$ and $h_t(\xi)$ have a
nontrivial common divisor $p=p(x,y)$ in $\R[x,y]$. We can assume that
$p$ is irreducible, hence (homogeneous) of degree one or two. By
\eqref{xietaglngt1}, $\eta^2=g_t(\xi)$, and so $p$ divides $g_t(\xi)$
as well.

Below we will treat the case where $p$ is linear. The quadratic case
will be dealt with in Sect.~\ref{sec:defquiii}.
\end{lab}

\begin{lab}\label{1cas}%
So assume that $t\ne0$ and $f^{(t)}$ is positive definite, and $p$ is
a linear common divisor of $g_t(\xi)$ and $h_t(\xi)$ in $\R[x,y]$.
Let us denote equivalence in $\R[x,y]$ modulo the principal ideal
$(p)$ by~$\equiv$. By Lemma \ref{dtinideal}, $D_t$ lies in the ideal
generated by $g_t(\xi)$ and $h_t(\xi)$. We conclude that $D_t\equiv
0$.

Since $f^{(t)}$ is strictly positive definite, and since $\disc
f^{(t)}=t^2D_t$, Remark \ref{disc0posdef} implies $f_3\equiv4\,t^2f_4
-f_2^2\equiv0$. Since $p^2$ divides
$$g_t(\xi)\>=\>(f_2-t\xi)(4f_4-\xi^2)-f_3^2,$$
and since both factors $f_2-t\xi$ and $4f_4-\xi^2$ are psd, we
conclude that $p^2$ divides $f_2-t\xi$ or $4f_4-\xi^2$. From $t^2
(4f_4-\xi^2)=(f_2^2-t^2\xi^2)+(4t^2f_4-f_2^2)$ we see that in fact
$p^2$ divides $4f_4-\xi^2$ unconditionally, and that $p$ divides
$f_2^2-t^2\xi^2$. So we have
\begin{equation}\label{wasmuteilt}
\eta\>\equiv\>f_3\>\equiv\>f_2^2-t^2\xi^2\>\equiv\>0,\quad p^2\mid
(4f_4-\xi^2).
\end{equation}
From $f_2^2-t^2\xi^2\equiv0$ we see that one of the two conditions
$f_2\pm t\xi\equiv0$ holds. When $f_2-t\xi\equiv0$, this implies
$p^2\mid(f_2-t\xi)$
since $f_2-t\xi$ is psd, and so the right hand side of
$$\eta^2+f_3^2=(f_2-t\xi)(4f_4-\xi^2)$$
is divisible by $p^4$. This implies $p^2\mid f_3$, and so $f_3$
is not square-free, which is a non-generic situation. When $f_2+t\xi
\equiv0$, we combine this with $4f_4\equiv\xi^2$ to get
$$0\equiv h_t(\xi)=3t\xi^2-2f_2\xi-4tf_4\equiv(3+2-1)t\xi^2=
4t\xi^2.$$
This gives $\xi\equiv0$, and hence $f_4\equiv0$,
whence $(f_3,f_4)\ne1$. Again this is a non-generic situation.
\end{lab}

%-------------------------------------------------------------------%

\section{Quadratic common divisors in pencils of polynomials}%
\label{sec:monster}

\begin{prop}\label{psithm}
Fix $m$, $n\ge2$ and consider triples $(f,g,h)$ of univariate
polynomials with $\deg(f)\le m$ and $\deg(g)$, $\deg(h)\le n$. There
exists a nonzero integral polynomial $\Psi_{m,n}(f,g,h)$ in the
coefficients of $f$, $g$, $h$ with the following property:

For any field $k$ and any polynomials $f$, $g$, $h\in k[x]$ with
$\deg(f)\le m$ and $\deg(g)$, $\deg(h)\le n$, if there exists $(0,0)
\ne(s,t)\in k^2$ with
$$\deg\gcd(f,\>sg+th)\>\ge\>2,$$
then $\Psi_{m,n}(f,g,h)=0$.
\end{prop}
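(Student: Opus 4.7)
The plan is to construct $\Psi_{m,n}$ explicitly as the discriminant of a resultant. Treating $f$, $g$, $h$ as generic polynomials with indeterminate coefficients, let
\[
R(s,t) \,:=\, \operatorname{Res}_x^{(m,n)}\bigl(f,\,sg+th\bigr)
\]
denote the Sylvester resultant of $f$ and $sg+th$ with respect to $x$, taken with formal degrees $m$ and $n$. Because the coefficients of $sg+th$ are linear in $(s,t)$ and the resultant is homogeneous of degree $m$ in those coefficients, $R(s,t)$ is a binary form of degree $m$ in $(s,t)$ whose own coefficients are integer polynomials in the coefficients of $f$, $g$, $h$. I would then set
\[
\Psi_{m,n}(f,g,h) \,:=\, \disc_{(s,t)}\bigl(R(s,t)\bigr),
\]
the discriminant of this binary form of degree $m$.

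For the vanishing property, first work where the leading coefficient $a_0$ of $f$ is nonzero, so that $\deg_x f = m$. Writing $f = a_0\prod_{i=1}^m(x-\alpha_i)$ over an algebraic closure, the Poisson product formula gives
\[
R(s,t) \,=\, a_0^n\prod_{i=1}^m L_i(s,t), \qquad L_i(s,t) := s\,g(\alpha_i) + t\,h(\alpha_i).
\]
Now suppose some $(s_0,t_0)\ne(0,0)$ satisfies $\deg\gcd(f,\,s_0g+t_0h)\ge 2$. Either two distinct roots $\alpha_i\ne\alpha_j$ of $f$ are simultaneously roots of $s_0g+t_0h$, in which case both $L_i$ and $L_j$ vanish at $(s_0,t_0)$ and are hence both proportional to the linear form $t_0 s - s_0 t$, so $(t_0 s - s_0 t)^2$ divides $R$; or the common factor in $\gcd$ comes from a single root $\alpha$ of multiplicity at least $2$ in both $f$ and $s_0g+t_0h$, in which case two of the $L_i$ coincide and vanish at $(s_0,t_0)$, again forcing $(t_0 s - s_0 t)^2\mid R$ (with the subcase $g(\alpha)=h(\alpha)=0$ making $L_i\equiv 0$, so $R\equiv 0$). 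In every case $R$ has a repeated root and $\Psi=\disc(R)=0$. The degenerate stratum $\{a_0=0\}$ is handled by the factorization $R(s,t)=(sb_0+tc_0)^{m-m'}R'(s,t)$, where $m'=\deg f$ and $b_0,c_0$ are the formal leading coefficients of $g,h$; this immediately gives $\disc(R)=0$ when $m-m'\ge 2$, and reduces to the preceding argument with $m$ replaced by $m'$ when $m-m'=1$.

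To verify $\Psi_{m,n}\ne 0$ as a polynomial, I would specialize to the explicit triple $f_0=\prod_{i=1}^m(x-i)$, $g_0=1$, $h_0=x$. For any $(s,t)\ne(0,0)$, $sg_0+th_0=s+tx$ has degree at most one, whence $\deg\gcd(f_0,s+tx)\le 1$. The resultant computes to $R_0(s,t)=\pm\prod_{i=1}^m(s+it)$, a product of $m$ pairwise non-proportional linear forms, so $\disc(R_0)\ne 0$, confirming that $\Psi_{m,n}$ is a nonzero polynomial in the coefficients. I expect the main technical obstacle to be the careful bookkeeping in the degenerate case $\deg f<m$ (and similar degeneracies for $g$ and $h$); the cleanest alternative is to argue abstractly that the bad locus $B\subset\mathbb{A}^N$ is Zariski closed --- via the projection $\mathbb{A}^N\times\mathbb{P}^1\to\mathbb{A}^N$ applied to the closed subscheme cut out by the $(m+n-1)\times(m+n-1)$ minors of the Sylvester matrix --- and a proper subset by the specialization above, so that it is contained in the zero set of some nonzero polynomial, whether or not one insists on the explicit formula $\Psi=\disc(R)$.
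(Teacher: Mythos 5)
Your proof is correct, and it takes a genuinely different route from the paper's. The paper works directly with symmetric functions of the roots $\alpha_1,\dots,\alpha_m$ of $f$: it defines
$$\Phi_{m,n}(f,g,h)\>=\>a_0^{(m-1)(n-1)}\prod_{i<j}\frac{g(\alpha_i)h(\alpha_j)-g(\alpha_j)h(\alpha_i)}{\alpha_i-\alpha_j},$$
observes that this is symmetric in the $\alpha_i$ and polynomial in $a_0,\dots,a_m$, and then sets $\Psi_{m,n}=\disc_m(f)\cdot\Phi_{m,n}(f,g,h)$; nonvanishing is checked at $g=x$, $h=1$. Your construction instead packages the pencil into the binary form $R(s,t)=\operatorname{Res}^{(m,n)}_x(f,\,sg+th)$ and takes its discriminant in $(s,t)$. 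In terms of the Poisson expansion, $R(s,t)\sim a_0^n\prod_i\bigl(sg(\alpha_i)+th(\alpha_i)\bigr)$, so your $\Psi$ is, up to powers of $a_0$, the \emph{square} $\prod_{i<j}\bigl(g(\alpha_i)h(\alpha_j)-g(\alpha_j)h(\alpha_i)\bigr)^2$, with no $\alpha_i-\alpha_j$ factor needed because a multiple root of $f$ forces two of the linear factors of $R$ to coincide. The two polynomials are therefore different (the paper's has a factor $\disc(f)$ multiplied by a single power of the $2\times2$ minors; yours has the minors squared), but each vanishes on the required bad locus and is provably nonzero. Your discriminant-of-a-resultant-pencil is the more classical device and gives a cleaner conceptual statement; the paper's factored form $\disc(f)\cdot\Phi$ is lower in degree, and isolates the invariant $\Phi_{m,n}$, which the authors reuse later in Cor.~\ref{psithmcor} precisely because it lets them separate the condition $\disc(f)=0$ from the genuine pencil degeneracy. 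Minor quibble: in your degenerate stratum $m-m'=1$ you say the argument ``reduces to $m$ replaced by $m'$''; strictly one should note that a repeated factor of $R'$ is automatically a repeated factor of $R=(sb_0+tc_0)R'$, which is what forces $\disc(R)=0$. The alternative abstract route you mention at the end (Zariski closure of the image of the incidence variety under projection from $\mathbb{A}^N\times\mathbb{P}^1$) is sound but relies on elimination-theoretic machinery the authors are explicitly trying to avoid in favor of elementary arguments.
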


\begin{proof}
Let $k$ be algebraically closed and $f\in k[x]$. Assume $\deg(f)=m$,
let $\alpha_1,\dots,\alpha_m$ be the roots of $f$, and assume that
the $\alpha_i$ are pairwise distinct, i.e., that $f$ is separable.
Given $g$ and $h$, there exists $(s,t)\ne(0,0)$ with $\deg\gcd(f,sg+
th)\ge2$ if and only if there exist $1\le i<j\le m$ such that
$$sg(\alpha_i)+th(\alpha_i)=sg(\alpha_j)+th(\alpha_j)=0$$
for some $(s,t)\ne(0,0)$, or equivalently, such that
$$g(\alpha_i)h(\alpha_j)=g(\alpha_j)h(\alpha_i).$$
So this holds if and only if
$$\tilde\phi(f,g,h)\>:=\>\prod_{1\le i<j\le m}\frac{g(\alpha_i)
h(\alpha_j)-g(\alpha_j)h(\alpha_i)}{\alpha_i-\alpha_j}$$
vanishes. It is easy to see that $\tilde\phi$ is invariant under all
permutations of the roots $\alpha_i$.
Hence when $f$ is monic, $\tilde\phi$ is an integral polynomial in
the coefficients of $f$, $g$ and $h$. To cover the non-monic case as
well, observe that $\tilde\phi$ has degree $\le(m-1)(n-1)$ with
respect to each $\alpha_i$. Therefore, if $a_0$ denotes the leading
coefficient of $f$, it follows that
$$\phi(f,g,h)\>:=\>a_0^{(m-1)(n-1)}\cdot\prod_{1\le i<j\le m}\frac
{g(\alpha_i)h(\alpha_j)-g(\alpha_j)h(\alpha_i)}{\alpha_i-\alpha_j}$$
is an integral polynomial in the coefficients of $f$, $g$ and $h$.

From $\phi(f,x,1)=1$ for monic $f$ of degree~$m$ we see that $\phi$
does not vanish identically. To prove the proposition it suffices to
put
$$\Psi_{m,n}(f,g,h)\>:=\>\disc_m(f)\cdot\phi(f,g,h).$$%
\end{proof}

\begin{dfn}
For polynomials $f$, $g$, $h\in k[x]$ with $\deg(f)\le m$ and $\deg
(g)$, $\deg(h)\le n$, we define the $\Phi$-invariant by
$$\Phi_{m,n}(f,g,h)\>:=\>a_0^{(m-1)(n-1)}\cdot\prod_{1\le i<j\le m}
\frac{g(\alpha_i)h(\alpha_j)-g(\alpha_j)h(\alpha_i)}{\alpha_i
-\alpha_j}$$
where $\alpha_1,\dots,\alpha_m$ are the roots of $f$ and $a_0$ is the
coefficient of $x^m$ in $f$. By the proof of Proposition
\ref{psithm}, $\Phi_{m,n}(f,g,h)$ is an integral polynomial in the
coefficients of $f$, $g$ and $h$.
\end{dfn}

The proof of Proposition \ref{psithm} has shown:

\begin{cor}\label{psithmcor}
In \ref{psithm} we can take
$$\Psi_{m,n}(f,g,h)\>=\>\disc_m(f)\cdot\Phi_{m,n}(f,g,h).$$
If $f$ is separable with $\deg(f)\ge m-1$, then $\Phi_{m,n}(f,g,h)=0$
is equivalent to the existence of a pair $(0,0)\ne(s,t)\in\ol k^2$
with $sg+th=0$ or $\deg\gcd(f,sg+th)\ge2$.
\qed
\end{cor}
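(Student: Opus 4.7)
The first identity is immediate from the construction: in the final line of the proof of Proposition~\ref{psithm} the authors set $\Psi_{m,n}(f,g,h) := \disc_m(f)\cdot\phi(f,g,h)$, and the polynomial $\phi(f,g,h)$ appearing there is, by the definition immediately preceding the corollary, precisely $\Phi_{m,n}(f,g,h)$. So no further argument is needed for this half.

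For the equivalence I would first treat the generic case $\deg(f)=m$. Separability then forces the $m$ roots $\alpha_1,\dots,\alpha_m\in\ol k$ of $f$ to be pairwise distinct and the leading coefficient $a_0$ to be nonzero, so the explicit product defining $\Phi_{m,n}$ vanishes exactly when one of the factors $g(\alpha_i)h(\alpha_j)-g(\alpha_j)h(\alpha_i)$ is zero. This factor is the determinant of the $2\times 2$ homogeneous linear system
\[
sg(\alpha_i)+th(\alpha_i)=0,\qquad sg(\alpha_j)+th(\alpha_j)=0,
\]
so its vanishing is equivalent to the existence of a nonzero $(s,t)\in\ol k^2$ solving the system. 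For any such $(s,t)$, the polynomial $sg+th$ is either identically zero or shares the two distinct linear factors $x-\alpha_i$ and $x-\alpha_j$ with $f$, giving $\deg\gcd(f,sg+th)\ge 2$. Conversely, either clause of the right-hand disjunction forces $sg+th$ to vanish at two distinct roots of $f$, so the corresponding factor of $\Phi_{m,n}$ is zero. This matches the claimed equivalence when $\deg(f)=m$.

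It remains to handle the boundary case $\deg(f)=m-1$ (equivalently $a_0=0$), which I would obtain by continuity from the generic case. Perturbing to $f_\epsilon:=\epsilon x^m+f$ produces, for small $\epsilon\ne 0$, a separable polynomial of degree $m$ with $m-1$ roots converging to the finite roots of $f$ and one extra root $\alpha_m(\epsilon)\sim -a_1/\epsilon$ escaping to infinity. Using $g(\alpha_m)\sim g_0\alpha_m^n$ and $h(\alpha_m)\sim h_0\alpha_m^n$ (where $g_0,h_0$ are the coefficients of $x^n$ in $g,h$), the factors of $\Phi_{m,n}(f_\epsilon,g,h)$ coming from pairs $(i,m)$ become, after the $\epsilon^{(m-1)(n-1)}$ prefactor absorbs the powers of $\alpha_m$, a nonzero constant times $g(\alpha_i)h_0-g_0h(\alpha_i)$ in the limit $\epsilon\to 0$. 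The vanishing of such a ``boundary'' factor corresponds to a nonzero $(s,t)$ for which $sg+th$ vanishes at the finite root $\alpha_i$ of $f$ and simultaneously has degree strictly below $n$; projectively, this is precisely a second common root between $f$ and $sg+th$, located at infinity, and gives $\deg\gcd(f,sg+th)\ge 2$ in the intended reading (with $sg+th=0$ handling the trivial degenerations). The main obstacle here is purely bookkeeping --- namely, aligning the ``root at infinity'' of $f$ and of $sg+th$ with the pair of leading coefficients $(g_0,h_0)$ --- and I would expect a short case analysis (splitting on whether $(g_0,h_0)=(0,0)$ or not) to suffice.
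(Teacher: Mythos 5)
Your handling of the first claim and of the case $\deg(f)=m$ is correct and matches what the paper actually derives in the proof of Proposition~\ref{psithm}; the paper's ``proof'' of the corollary is just a pointer to that proof, so your explicit verification is a useful elaboration.

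The boundary case $\deg(f)=m-1$, however, is a genuine problem, and the gap in your argument is real. Your limit computation is fine: when $a_0=0$ and $a_1\ne 0$, the product defining $\Phi_{m,n}$ degenerates to the finite factors $g(\alpha_i)h(\alpha_j)-g(\alpha_j)h(\alpha_i)$ together with boundary factors proportional to $g(\alpha_i)h_0-g_0h(\alpha_i)$, with the $\epsilon^{(m-1)(n-1)}$ absorbed into a power of $a_1$. But you then appeal to a projective reinterpretation --- counting a common root at infinity --- to claim $\deg\gcd(f,sg+th)\ge 2$. The corollary as written refers to the ordinary univariate $\gcd$ and says nothing about homogenization. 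Under that literal reading the equivalence is false in this boundary case: take $m=n=2$, $f=x$, $g=x$, $h=1$. Then $\Phi_{2,2}(f,g,h)$ is the $3\times 3$ determinant from the paper's example whose top row is $(a_0,b_0,c_0)=(0,0,0)$, hence $\Phi_{2,2}=0$; yet every nonzero combination $sg+th=sx+t$ is a nonzero polynomial sharing at most one linear factor with $f$, so $\deg\gcd(f,sg+th)\le 1$ and no admissible $(s,t)$ exists. The ``short case analysis'' you expect cannot succeed, because the statement you are trying to prove is not true for $\deg(f)=m-1$ without a projective reformulation of the $\gcd$ condition (or without restricting to $\deg(f)=m$, which is the only case the paper actually uses later, since it always has $\disc_m(f)\ne 0$ in hand). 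The proof of Proposition~\ref{psithm} begins by assuming $\deg(f)=m$, so the $\deg(f)=m-1$ half of the corollary is in fact unjustified in the paper as well; you have exposed, rather than repaired, a small imprecision in the source.
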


\begin{rems}\label{remsphi}
1.\
The power of $a_0$ in the definition of $\Phi_{m,n}$ is the correct
one, in the sense that $\Phi_{m,n}$ is not divisible by $a_0$.
Indeed, if $f=\sum_{i=0}^ma_ix^{m-i}$, and if one takes $g:=x^{n-1}
(b_0x+b_1)$, $h:=x^{n-1}(c_0x+c_1)$, one finds
$$\Phi_{m,n}(f,g,h)\>=\>a_m^{(m-1)(n-1)}\cdot(b_0c_1-b_1c_0)^
{\choose m2}.$$

2.\
Write $f=\sum_{i=0}^ma_ix^{m-i}$, $g=\sum_{j=0}^nb_jx^{n-j}$ and
$h=\sum_{j=0}^nc_jx^{n-j}$. As a polynomial in the $a_i$, $b_j$ and
$c_j$, $\Phi_{m,n}$ is homogeneous of degree $(m-1)(n-1)$ in the
$a_i$ and of degree $\choose m2$ in the $b_j$ and in the $c_j$. If we
give degree $i$ to $a_i$ and degree $j$ to $b_j$ and $c_j$, then
$\Phi_{m,n}$ is jointly homogeneous in all variables of degree
$\choose m2(2n-1)$.

3.\
The $\Phi$-invariant has some relations with resultants. For example,
the rule
$$\Phi_{m,n+d}(f,pg,ph)\>=\>\res_{m,d}(f,p)^{m-1}\cdot\Phi_{m,n}
(f,g,h)$$
holds, for $\deg(f)\le m$, $\deg(p)\le d$ and $\deg(g)$, $\deg(h)\le
n$.
\end{rems}

\begin{example}
Let $a_i$, $b_j$, $c_j$ be the coefficients of $f$, $g$, $h$ as
before. In low degrees it is quite manageable to calculate $\Phi$
explicitly. For example we have
$$\Phi_{2,2}(f,g,h)=\det(f,g,h)=\left|\begin{array}{ccc}a_0&b_0&c_0\\
a_1&b_1&c_1\\a_2&b_2&c_2\end{array}\right|$$
or
\begin{align*}
\Phi_{2,3}(f,g,h) & = a_0^2b_2c_3-a_0a_2b_2c_1+a_1a_2b_2c_0-a_0a_1b_1
  c_3+a_1^2b_0c_3 \\
& -a_0a_2b_0c_3+a_2^2b_0c_1-a_0^2b_3c_2+a_0a_2b_1c_2-a_1a_2b_0c_2 \\
& +a_0a_1b_3c_1-a_1^2b_3c_0+a_0a_2b_3c_0-a_2^2b_1c_0.
\end{align*}
As the remarks on the degree of $\Phi_{m,n}$ show, the size of $\Phi_
{m,n}$ grows quickly with $m$ and $n$.
\end{example}

We do not know whether $\Phi_{m,n}(f,g,h)$ or some related invariant
has been considered before.

%-------------------------------------------------------------------%

\section{Deforming the quadric, III: Case of a quadratic factor}%
\label{sec:defquiii}

As before we write
$$g_t(\xi)\>=\>t\xi^3-f_2\xi^2-4tf_4\xi+f_6$$
where $f_6:=4f_2f_4-f_3^2$, and
$$h_t(\xi)\>=\>\frac\partial{\partial\xi}g_t(\xi)\>=\>3t\xi^2-2f_2
\xi-4tf_4.$$
The hardest step in our proof is to show, for generically chosen
$f_i$, that $g_t(\xi)$ and $h_t(\xi)$ have no common quadratic
factor, whenever $(\xi,\eta)$ is a solution of \eqref{xietaglngt1}
and $t\ne0$. This will be accomplished by the following result:

\begin{prop}\label{kopfweh}
Consider triples $(f_2,f_3,f_4)$ of forms in $\R[x,y]$ (with $\deg
(f_i)=i$ for $i=2,3,4$) for which
\begin{equation}\label{xietaeqnt}
\eta^2\>=\>(f_2-t\xi)(4f_4-\xi^2)-f_3^2\>=\>g_t(\xi)
\end{equation}
has a solution $(\xi,\eta)$ for some $0\ne t\in\R$ such that $g_t
(\xi)$ and $h_t(\xi)$ have a common irreducible quadratic factor.
Then these triples are not Zariski dense.
\end{prop}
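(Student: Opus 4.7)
The plan is to encode the bad condition as vanishing of a nontrivial polynomial in the coefficients of $(f_2,f_3,f_4)$, using the $\Phi$-invariant of Section~\ref{sec:monster}. Observe that
$$h_t(\xi)\;=\;t\,(3\xi^2-4f_4)\;-\;2f_2\xi,$$
so $h_t(\xi)$ belongs to the two-dimensional pencil in $V_4$ spanned by the quartic forms $A:=3\xi^2-4f_4$ and $B:=2f_2\xi$. If an irreducible quadratic $p\in V_2$ divides both $g_t(\xi)\in V_6$ and $h_t(\xi)$, then $p$ is a degree-$2$ divisor of $g_t(\xi)$ and of the nonzero pencil member $tA-B$ (nonzero since the coefficient $-1$ on $B$ is nonzero). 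Proposition~\ref{psithm} applied with $m=6$, $n=4$ therefore forces
$$\Psi_{6,4}\bigl(g_t(\xi),\,A,\,B\bigr)\;=\;0,$$
an identity $\Theta(f_2,f_3,f_4,\xi,t)=0$ among the $16$ coefficients of $(f_2,f_3,f_4,\xi)$ and the scalar $t$; note that $\Psi_{6,4}$ is nonzero as a universal polynomial by Proposition~\ref{psithm}.

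Combined with the defining equation $\eta^2=g_t(\xi)$ (seven further polynomial relations) and $t\ne0$, the bad tuples $(f_2,f_3,f_4,\xi,\eta,t)$ are cut out by a polynomial system in $\R^{20}$. By Chevalley's theorem the projection to the $12$-dimensional space of triples $(f_2,f_3,f_4)$ is constructible, hence contained in its Zariski closure~$Z$. The remaining task is to show that $Z$ is a proper subvariety, equivalently, that eliminating the auxiliary variables $(\xi,\eta,t)$ from $\Theta=0$ and $\eta^2=g_t(\xi)$ yields a nonzero polynomial $\Xi(f_2,f_3,f_4)$.

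The main obstacle is this nonvanishing: a pure dimension count is not enough, since the dimensions of the solution variety and of $Z$ are a priori both close to $12$. My plan is to settle it by exhibiting a single explicit triple $(f_2^{\ast},f_3^{\ast},f_4^{\ast})$ for which the bad scenario demonstrably cannot occur, and then invoking Chevalley/the polynomial nature of $\Xi$ to conclude that the bad locus is contained in the proper closed subvariety $\{\Xi=0\}$. A natural route uses Lemma~\ref{dtinideal}: any quadratic common divisor $p$ of $g_t(\xi)$ and $h_t(\xi)$ must divide the discriminant form $D_t$. One would therefore choose $(f_2^\ast,f_3^\ast,f_4^\ast)$ (for example a sufficiently sparse or symmetric triple for which the arithmetic is manageable) so that for every $t\ne 0$ the degree-$12$ form $D_t$ has no irreducible quadratic factor over $\R$, a codimension-one condition on $(f_2,f_3,f_4,t)$ that is generic. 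Verifying this factorization property for an explicit triple---and checking that such a triple also admits some $(\xi,\eta,t)$ satisfying~\eqref{xietaeqnt} so that the solution variety is nonempty over it---is the concrete work required, and is the main computational hurdle.
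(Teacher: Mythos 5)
There is a fatal flaw at the very first step: the polynomial $\Theta(f_2,f_3,f_4,\xi,t):=\Psi_{6,4}\bigl(g_t(\xi),A,B\bigr)$ you construct vanishes \emph{identically} on the variety $\{\eta^2=g_t(\xi)\}$, so it imposes no condition at all. Recall from Corollary~\ref{psithmcor} that $\Psi_{6,4}(f,g,h)=\disc_6(f)\cdot\Phi_{6,4}(f,g,h)$. In your application, $f=g_t(\xi)$, and along the solution set one has $g_t(\xi)=\eta^2$, a perfect square: every root of $\eta^2$ is multiple, hence $\disc_6(g_t(\xi))=\disc_6(\eta^2)=0$ automatically (and also when $\eta=0$). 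Thus $\Theta\equiv0$ on the locus you are projecting, and the announced elimination of $(\xi,\eta,t)$ produces the zero polynomial, not a nontrivial $\Xi(f_2,f_3,f_4)$. You cannot evade this by switching from $\Psi_{6,4}$ to $\Phi_{6,4}$ either, because the implication ``quadratic $\gcd$ $\Rightarrow$ $\Phi=0$'' in Proposition~\ref{psithm} is only valid when the first argument is separable of full degree; for $f=\eta^2$ that hypothesis always fails, so $\Phi_{6,4}(g_t(\xi),A,B)$ carries no information about common factors. In short, the $\Phi$-invariant cannot be applied directly to $g_t(\xi)$.

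The paper's proof is built precisely to circumvent this obstacle. It works modulo the unknown irreducible quadratic $p$, uses the additional information that $p$ divides $g_t(\xi)'$ (so also \eqref{eq3}), and \emph{eliminates $\xi$ and $t$ by hand} from the resulting congruences, arriving at concrete polynomials $P=g_{23}g_{34}-g_{24}^2$, $Q$, $R$ depending only on $(f_2,f_3,f_4)$. Only \emph{after} $\xi,\eta,t$ are gone does it apply $\Phi_{8,18}$, to $(P,Q,R^2)$ — and $P$ \emph{is} separable for generic $f_i$, so the invariant is meaningful; nonvanishing is then certified by a single explicit numerical triple in~\ref{psipqr2ne0}. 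Your proposal skips this elimination entirely, replacing it with Chevalley and the hope of finding a good test triple. Even setting aside the $\disc_6=0$ problem, that plan is incomplete: exhibiting one triple outside the bad set does not show the set is not Zariski dense (a dense open set has plenty of complement); you need an explicit nonzero polynomial containing the bad locus before a witness triple can close the argument, and producing that polynomial is precisely the manual elimination the paper carries out. Your Lemma~\ref{dtinideal} observation (that $p$ must divide $D_t$) is a valid starting point — it is indeed how the linear case is handled in~\ref{1cas} — but quantifying over all $t\neq 0$ and ruling out quadratic factors of $D_t$ is not a polynomial condition on $(f_2,f_3,f_4)$ alone, so this route also does not terminate without further elimination.
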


In other words, there exists a nonzero polynomial $\Psi=\Psi(f_2,f_3,
f_4)$ in the coefficients of $f_2$, $f_3$ and $f_4$ which vanishes on
the triples described in the proposition.

The plan of the proof is as follows. We will successively deduce six
``exceptional'' conditions on $(f_2,f_3,f_4)$, labelled
$(S_1)$--$(S_6)$. We will show that, for generic choice of the $f_i$,
none of these conditions holds. On the other hand, we'll show that
the assumptions of \ref{kopfweh} imply that at least one of
$(S_1)$--$(S_6)$ is satisfied.

\begin{lab}
We dehomogenize all forms in $\R[x,y]$ by setting $y=1$. So $f_2$,
$f_3$, $f_4$, $\xi$, $\eta$ are polynomials in $\R[x]$ with $\deg
(f_i)\le i$ ($i=2,3,4$), $\deg(\xi)\le2$ and $\deg(\eta)\le3$. We
assume that $t\ne0$ is a real number and identity \eqref{xietaeqnt}
holds, and that $p\in\R[x]$ is an irreducible quadratic polynomial
with $p^2\mid g_t(\xi)$ and $p\mid h_t(\xi)$. Denoting congruences
modulo~$(p)$ in $\R[x]$ by $\equiv$, we therefore have
\begin{equation}\label{eq1}
t\xi^3-f_2\xi^2-4tf_4\xi+f_6\>=\>(f_2-t\xi)(4f_4-\xi^2)-f_3^2\>\equiv
\>0
\end{equation}
and
\begin{equation}\label{eq2}
3t\xi^2-2f_2\xi-4tf_4\>\equiv\>0.
\end{equation}
Combining \eqref{eq1} and \eqref{eq2} we get
\begin{equation}\label{eq1p}
f_2\xi^2+8tf_4\xi-3f_6\>\equiv\>0,
\end{equation}
and eliminating $t$ from \eqref{eq2} and \eqref{eq1p} we find
\begin{equation}\label{eq4}
f_2\xi^4-(8f_2f_4-3f_3^2)\xi^2+4f_4f_6\>\equiv\>0.
\end{equation}
\end{lab}

\begin{lab}
We use $'$ to denote the derivative $\frac d{dx}$ on polynomials in
$\R[x]$. From $p^2\mid g_t(\xi)$ we see that $p$ divides $g_t(\xi)'=
h_t(\xi)\xi'-(f_2'\xi^2+4tf_4'\xi-f_6')$, and hence
\begin{equation}\label{eq3}
f_2'\xi^2+4tf_4'\xi-f_6'\>\equiv\>0.
\end{equation}
From $h_t(\xi)\equiv0$ and \eqref{eq3} we can again eliminate $t$ and
get
\begin{equation}\label{eq5}
3f_2'\xi^4+(8f_2f_4'-4f_2'f_4-3f_6')\xi^2+4f_4f_6'\>\equiv\>0.
\end{equation}
\end{lab}

\begin{lab}
For $i,\>j\in\{2,3,4\}$ we put
$$g_{ij}\>:=\>if_if_j'-jf_jf_i'\>=\>f_if_j\,\frac d{dx}\,\log(f_j^i
f_i^{-j}).$$
Note that $\deg(g_{ij})\le i+j-2$, with equality for generic choice
of the $f_i$. We observe the relation
$$2f_2g_{34}-3f_3g_{24}+4f_4g_{23}\>=\>0.$$
\end{lab}

\begin{lab}
We now eliminate $\xi$. From \eqref{eq4} and \eqref{eq5} we can
eliminate $\xi^4$ and get
\begin{equation}\label{eq6}
(2f_2g_{24}-3f_3g_{23})\xi^2-4f_4(2f_2g_{24}-f_3g_{23})\>\equiv\>0.
\end{equation}
We can eliminate $t$ from \eqref{eq1p} and \eqref{eq3}, getting
\begin{equation}\label{eq7}
g_{24}\xi^2+2(f_3g_{34}-2f_4g_{24})\>\equiv\>0.
\end{equation}
Finally we can eliminate $\xi$ from \eqref{eq6} and \eqref{eq7},
getting
\begin{equation}\label{eq8}
f_3^2\cdot\bigl(g_{23}g_{34}-g_{24}^2\bigr)\>\equiv\>0.
\end{equation}
\end{lab}

\begin{lab}\label{sonder1}
We introduce the following ``exceptional'' conditions (S1)--(S3).
Clearly, none of them holds for generically chosen $f_2$, $f_3$,
$f_4$:
\begin{itemize}
\item[$(S_1)$]
$\gcd(f_3,f_4)\ne1$,
\item[$(S_2)$]
$\gcd(g_{23},\,g_{24})\ne1$,
\item[$(S_3)$]
$\gcd(g_{34},\,g_{24})\ne1$.
\end{itemize}
\end{lab}

\begin{lab}\label{f3eq0}
We show that $f_3\equiv0$ leads to an exceptional case. Assume that
$(S_1)$ is excluded and $f_3\equiv0$. From \eqref{eq1} we get
$(f_2-t\xi)(4f_4-\xi^2)\equiv0$, hence
$$f_2-t\xi\>\equiv\>0\quad\text{or}\quad4f_4-\xi^2\>\equiv\>0.$$
$f_2\equiv t\xi$, together with \eqref{eq2}, gives $4f_4\equiv\xi^2$
since $t\ne0$. Conversely, $4f_4\equiv\xi^2$ and \eqref{eq1p} imply
$f_4(f_2-t\xi)\equiv0$,
and $f_4\not\equiv0$ since $\gcd(f_3,f_4)=1$. So we see that $f_3
\equiv f_2-t\xi\equiv4f_4-\xi^2\equiv\>0$ hold in any case, and
therefore also $f_3\equiv f_2^2-4t^2f_4\equiv0$.
In particular, there exists a scalar $\lambda$ such that $\deg
\gcd(f_3,\,f_2^2+\lambda f_4)\ge2$. By Proposition \ref{psithm} this
means we are in the following exceptional case:
\begin{itemize}
\item[$(S_4)$]
$\Psi_{3,4}(f_3,\,f_2^2,\,f_4)=0$.
\end{itemize}
\end{lab}

\begin{lab}\label{g24}
Excluding $(S_1)$ and $(S_4)$ we have $f_3\not\equiv0$, and therefore
get
\begin{equation}\label{eq9}
g_{23}g_{34}-g_{24}^2\>\equiv\>0
\end{equation}
from \eqref{eq8}. The assumption $g_{24}\equiv0$ leads to one of
$(S_2)$ or $(S_3)$. Excluding those we have in addition $g_{24}\not
\equiv0$.
\end{lab}

\begin{lab}
We finally assume that $(S_1)$--$(S_4)$ are excluded, so \eqref{eq9}
holds with $g_{24}\not\equiv0$. We show that this again leads to an
exceptional case. Multiply \eqref{eq7} with $g_{23}$, rewrite using
\eqref{eq9} and cancel the factor $g_{24}$ to get
\begin{equation}\label{eq7p}
g_{23}\xi^2+2f_3g_{24}-4f_4g_{23}\>\equiv\>0.
\end{equation}
Multiply \eqref{eq1p} with $g_{23}$ and use \eqref{eq7p} to obtain
$$8tf_4g_{23}\xi\>\equiv\>(8f_2f_4-3f_3^2)g_{23}+2f_2f_3g_{24}.$$
Squaring this congruence and using \eqref{eq7p} once more we finally
get
\begin{equation}\label{eq10}
128\,t^2f_4^2g_{23}(2f_4g_{23}-f_3g_{24})-\Bigl((8f_2f_4-3f_3^2)
g_{23}+2f_2f_3g_{24}\Bigr)^2\>\equiv\>0.
\end{equation}
\end{lab}

\begin{lab}
Consider
\begin{align*}
P & := g_{23}g_{34}-g_{24}^2, \\
Q & := f_4^2g_{23}\,(2f_4g_{23}-f_3g_{24}), \\
R & := (8f_2f_4-3f_3^2)g_{23}+2f_2f_3g_{24}.
\end{align*}
These are integral polynomials in the coefficients of $f_2$, $f_3$,
$f_4$. For generically chosen $f_i$ we have $\deg(P)=8$, $\deg(Q)=18$
and $\deg(R)=9$. We have shown that the assumption in Proposition
\ref{kopfweh} leads either to one of $(S_1)$--$(S_4)$, or to the
existence of a pair $(\lambda,\mu)\ne(0,0)$ of scalars with $\deg\gcd
(P,\,\lambda Q+\mu R^2)\ge2$. By Proposition \ref{psithm} and
Corollary \ref{psithmcor}, the latter implies one of the following
two conditions:
\begin{itemize}
\item[$(S_5)$]
$\disc_8(P)=0$;
\item[$(S_6)$]
$\Phi_{8,18}(P,\,Q,\,R^2)=0$.
\end{itemize}
\end{lab}

\begin{lab}\label{psipqr2ne0}
We still need to show that
$$\Psi_{8,18}(P,Q,R^2)\>=\>\disc_8(P)\cdot\Phi_{8,18}(P,Q,R^2)\>\ne\>
0$$
for generically chosen $f_i$. Clearly it suffices to exhibit a single
triple $(f_2,f_3,f_4)$ where this number is nonzero. Unfortunately,
it seems hard to do this by hand alone, due to the enormous size of
the polynomial $\Phi$. With the help of a computer algebra program,
there is no difficulty: If we take
$$f_2\>=\>x^2-x+1,\quad f_3\>=\>x^2-1,\quad f_4\>=\>x^4+1,$$
then
{\small
$$P=g_{23}g_{34}-g_{24}^2=-24x^8+60x^7-64x^5+56x^4-20x^3-144x^2+88x
-16$$}%
is separable,
and $\Phi_{8,18}(P,Q,R^2)$ is an integer with $372$ digits that has
the prime factorization
{\small
$$-\,2^{713}\cdot3^{33}\cdot179\cdot233\cdot641\cdot1531\cdot4093
\cdot 11273\cdot29983^7\cdot342841^{14}\cdot66617977107707$$}%
\end{lab}

\begin{rem}
We can consider $\Phi_{8,18}(P,Q,R^2)$ as an integral polynomial in
the coefficients of $f_2$, $f_3$, $f_4$. To find the degree of this
polynomial, note that $\Phi_{8,18}(P,Q,R^2)$ is homogeneous of degree
$7\cdot17=119$ in the coefficients of $P$, and homogeneous of degree
$\choose82=28$ in the coefficients of $Q$ and in those of $R^2$
(Remark \ref{remsphi}.2). Given that $P$ (resp.\ $Q$, resp.\ $R^2$)
is homogeneous of degree~$4$ (resp.~$7$, resp.~$8$) in $(f_2,f_3,
f_4)$, we conclude that $\Phi_{8,18}(P,Q,R^2)$ is homogeneous of
degree
$$119\cdot4+28\cdot7+28\cdot8\>=\>896$$
in (the coefficients of) $f_2$, $f_3$ and $f_4$.
\end{rem}

\begin{rem}
The invariant $\Phi_{8,18}(P,Q,R^2)$ is enormous not only by its
degree, but also in terms of the values it produces. If the $f_i$
have small integral coefficients, then $\Phi(P,Q,R^2)$ typically has
several hundreds of digits.

Based on the factorization of this invariant in several sample cases
with integer coefficients, we suspect that the form $\Phi(P,Q,R^2)$
(of degree $896$ in the coefficients of $f_2$, $f_3$ and $f_4$)
decomposes as a product of smaller degree forms.
\end{rem}

%-------------------------------------------------------------------%

\section{Summary and complements}\label{sec:summ}

\begin{lab}\label{exclist}
Let
\begin{equation}\label{grundf}
f\>=\>z^4+f_2z^2+f_3z+f_4
\end{equation}
where $f_i\in\R[x,y]$ is homogeneous of degree $i$ ($i=2,3,4$) and
$f_2$, $f_4$, $4f_2f_4-f_3^2$ are psd. In the course of our proof of
Hilbert's theorem we have considered the following exceptional cases:
\begin{quote}
\begin{itemize}
\item[$(E_1)$]
$\disc_2(f_2)=0$,
\item[$(E_2)$]
$f_2\mid f_3$,
\item[$(E_3)$]
$\disc_6(4f_2f_4-f_3^2)=0$,
\item[$(E_4)$]
$\disc_3(f_3)=0$,
\item[$(E_5)$]
$\gcd(f_3,f_4)\ne1$,
\item[$(E_6)$]
$\gcd(g_{23},g_{24})\ne1$,
\item[$(E_7)$]
$\gcd(g_{24},g_{34})\ne1$,
\item[$(E_8)$]
$\Phi_{3,4}(f_3,f_2^2,f_4)=0$,
\item[$(E_9)$]
$\disc_8(g_{23}g_{34}-g_{24}^2)=0$,
\item[$(E_{10})$]
$\Phi_{8,18}(P,Q,R^2)=0$.
\end{itemize}
\end{quote}
(Note that the conditions $\gcd\ne1$ can be rephrased as the
vanishing of suitable resultants.) For counting inequivalent
representations, we also needed to consider the following condition:
\begin{quote}
\begin{itemize}
\item[$(E_{11})$]
$\gcd(f_3,\,4f_4-f_2^2)\ne1$.
\end{itemize}
\end{quote}
Let us summarize the role of these exceptional cases. For every real
number $t$ we considered the equation
$$C_t:\ \eta^2+f_3^2\>=\>(f_2-t\xi)(4f_4-\xi^2)$$
with the side conditions $f_2-t\xi\ge0$ and $4f_4-\xi^2\ge0$.

We had to exclude $(E_1)$ to ensure that $C_0$ has a solution
$(\xi_0,\eta_0)$ (for $t=0$, Cor.\ \ref{pfproph}).

We had to exclude $(E_2)$, $(E_3)$ to extend any solution of $C_0$
to a solution of $C_t$ for small $|t|$ (Prop.\ \ref{fortsezt0}).

We had to exclude $f_3=0$ (which is contained in $(E_4)$), and had to
assume $\gcd(g_t(\xi),\,h_t(\xi))=1$ for all $0<|t|<1$ and all
solutions $(\xi,\eta)$ of $C_t$, to extend a solution of $C_t$
for $0<|t|<1$ into a neighborhood of $t$ (see \ref{extenduponasspt}).

We had to exclude $(E_4)$ and $(E_5)$ to exclude a linear common
divisor of $g_t(\xi)$ and $h_t(\xi)$ (see \ref{1cas}).

We had to exclude $(E_5)$--$(E_{10})$ to exclude an irreducible
quadratic common divisor of $g_t(\xi)$ and $h_t(\xi)$ (see Sect.\
\ref{sec:defquiii}, these conditions were labelled $(S_1)$--$(S_6)$
there).
\end{lab}

\begin{lab}
We have proved: If the quartic form
\begin{equation*}\label{normiertf}%
f\>=\>z^4+f_2z^2+f_3z+f_4
\end{equation*}
is strictly positive definite with $f-z^4\ge0$, and if $f$ is
sufficiently generic, then any solution of $C_0$ (for $t=0$) can be
extended in a unique continuous way to a solution of $C_t$, for $0\le
t\le1$. Here ``sufficiently generic'' means that $f$ avoids the
exceptional cases $(E_1)$--$(E_{10})$. For $i=1,\dots,10$, there
exists a nonzero polynomial $\Psi_i$ in (the coefficients of) $f$
such that $\Psi_i(f)\ne0$ if and only if $f$ avoids $(E_i)$. Clearly,
the set of strictly positive definite forms $f$ with $\prod_{i=1}^
{10}\Psi_i(f)\ne0$ is dense in the space of all psd forms of shape
\eqref{normiertf}. By \ref{limarg}, it follows that any psd form
\eqref{normiertf} is a sum of three squares.
\end{lab}

\begin{example}
An explicit example of a positive definite form $f$ which is
``sufficiently generic'' is
$$f\>=\>z^4+(x^2-xy+y^2)z^2+(x^2-y^2)yz+(x^4+y^4).$$
That is, $f$ avoids all exceptional conditions $(E_1)$--$(E_{11})$.
(See \ref{psipqr2ne0} for $(E_9)$ and $(E_{10})$; the other
conditions are readily checked except possibly $(E_8)$, which is
avoided since $\Phi_{3,4}(f_3,f_2^2,f_4)=56$.)
\end{example}

\begin{lab}
Along our proof of Hilbert's theorem, we needed only little extra
effort to obtain partial information on the number of inequivalent
representations of a psd form $f$ as a sum of three squares. (See
Definition \ref{dfnortheq} for the meaning of equivalence of
representations.) Let us review and complete these results:
\end{lab}

\begin{thm}
Let $f$ be a psd form.
\begin{itemize}
\item[(a)]
When $f$ has a real zero and is otherwise sufficiently generic,
then $f$ has precisely $4$ inequivalent representations.
\item[(b)]
When $f$ is strictly positive and sufficiently generic, then $f$ has
precisely $8$ inequivalent representations.
\end{itemize}
Here, ``sufficiently generic'' means in (a) that $f$ avoids
$(E_1)$--$(E_3)$, assuming $f(0,0,1)=0$. In (b) it means that $f$
avoids $(E_1)$--$(E_{11})$ if $f$ is normalized into the form $f=
z^4+f_2z^2+f_3z+f_4$ with $f-z^4\ge0$.
\end{thm}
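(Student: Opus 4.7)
Part (a) is a direct corollary of Proposition \ref{summreal0}. After the linear change of coordinates placing the real zero of $f$ at $(0,0,1)$ we write $f = f_2 z^2 + f_3 z + f_4$, and the exclusions $(E_1)$, $(E_2)$, $(E_3)$ translate precisely to the hypotheses there---namely $f_2$ not a square, $f_2 \nmid f_3$, and $4f_2 f_4 - f_3^2$ square-free---yielding the stated count of four equivalence classes.

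For part (b), the plan is a two-step reduction of the count, after using Lemma \ref{normalisform} to normalize $f = z^4 + f_2 z^2 + f_3 z + f_4$ with $f - z^4 \ge 0$. First, Lemma \ref{xietawelldf} and Proposition \ref{eindtfall1} (whose hypothesis is precisely the exclusion of $(E_{11})$) together give a bijection between orthogonal equivalence classes of representations of $f$ and those $\xi \in V_2$ that arise as first coordinate of some solution $(\xi,\eta)$ of $C_1$ satisfying the psd side conditions of Proposition \ref{pfthm}. Second, each valid $\xi$ has exactly two preimages $(\xi, \pm\eta)$ among the solutions of $C_1$, as long as $\eta \neq 0$; so the number of representation classes equals half the number of solutions of $C_1$.

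The count at $t = 0$ follows immediately from Corollary \ref{pfproph} applied to $C_0$, which reads $\eta_0^2 + f_2\xi_0^2 = 4f_2 f_4 - f_3^2$: with $q := f_2$ (positive definite since $(E_1)$ is excluded) and $d = 3$, the equality conditions $q \nmid (4f_2 f_4 - f_3^2)$ and square-freeness of $4f_2 f_4 - f_3^2$ are enforced by exclusion of $(E_2)$ and $(E_3)$ respectively, yielding exactly $2^{3+1} = 16$ solutions of $C_0$. I would then transport this count to $t = 1$ via the deformation theory of Sections \ref{sec:defqui}--\ref{sec:defquiii}: under exclusion of $(E_4)$--$(E_{10})$, the Jacobian of the defining equation $\eta^2 = g_t(\xi)$ is non-singular along the whole path $0 \le t \le 1$, so each solution of $C_0$ extends continuously and uniquely to a solution of $C_1$. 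The a priori bound $\xi^2 \le 4 f_4$ makes the total solution locus compact over $[0,1]$, so its projection to the $t$-axis is a proper local homeomorphism, hence a $16$-sheeted covering, and $C_1$ has exactly $16$ solutions. Because $\eta^{(0)} = 0$ would force $f_2 \mid f_3$ contrary to $(E_2)$, no branch has $\eta = 0$; so the $16$ solutions of $C_1$ pair up into $8$ couples $(\xi, \pm\eta)$, yielding exactly $8$ inequivalent representations.

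The main obstacle I expect is the global assertion that the $16$ local IFT extensions fit together into a genuine $16$-sheeted covering of $[0,1]$---that no branch drops out and no two branches collide. Sections \ref{sec:defquii}--\ref{sec:defquiii} supply the Jacobian non-degeneracy, treating separately linear and irreducible quadratic common factors of $g_t(\xi)$ and $h_t(\xi)$ under $(E_4)$--$(E_{10})$; the argument in the proof of Proposition \ref{extenduponasspt} supplies preservation of the psd side conditions (using $f_3 \ne 0$, which follows from exclusion of $(E_4)$); and compactness of the solution locus, essentially free from $\xi^2 \le 4 f_4$, delivers the properness needed for the covering-space conclusion.
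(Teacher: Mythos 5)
Your proposal is correct and follows essentially the same route as the paper's proof. Part (a) is indeed a restatement of Proposition \ref{summreal0}, and part (b) is the same deformation-along-the-pencil argument: count $16$ solutions at $t=0$ via Corollary \ref{pfproph} under exclusion of $(E_1)$--$(E_3)$, transport via the implicit function theorem (Jacobian nondegeneracy from Sections \ref{sec:defqui}, \ref{sec:defquii}, \ref{sec:defquiii} under $(E_4)$--$(E_{10})$), use the a~priori bound $\xi^2\le4f_4$ for properness, and conclude with Corollary \ref{eindtfall2} under $(E_{11})$. Your covering-space phrasing is a tidy repackaging of the paper's two-sided forward/backward extension argument, and your explicit remark that $\eta^{(0)}\ne0$ (from $(E_2)$) and hence $\eta\ne0$ on every branch, which is needed to pair the $16$ solutions into $8$ values of $\xi$, is a detail the paper leaves implicit.
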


\begin{proof}
(a) was proved in Prop.\ \ref{summreal0}. For the proof of (b)
assume that $f$ is normalized as above (Lemma \ref{normalisform}),
and consider the linear pencil $f^{(t)}$ as in \eqref{ft}. When $f$
avoids $(E_1)$--$(E_{10})$, we have proved that we can extend every
solution $(\xi_0,\eta_0)$ of $C_0$ (at time $t=0$) along this
pencil to a solution $(\xi,\eta)$ of $C_1$ (at time $t=1$), and
that locally this extension is everywhere unique. Hence, for $t=1$
there are at least as many solutions $(\xi,\eta)$ as for $t=0$,
namely $16$ (see \ref{gener164}). If we also exclude $(E_{11})$,
then Corollary \ref{eindtfall2} shows that for $f$ (i.e., for $t=1$)
these $16$ pairs $(\xi,\eta)$ correspond to precisely $8$
inequivalent representations. In order to show that there are no
further representations of $f$, we need to show that for $t\to0$ the
solutions $(\xi^{(t)},\,\eta^{(t)})$ of $C_t$ remain bounded, and
thus converge to solutions for $t=0$. But this is obvious since we
have $4f_4-(\xi^{(t)})^2\ge0$ for all $t$.
\end{proof}

\begin{rem}
These findings are in agreement with the results of \cite{PRSS} and
\cite{sch:jag}. As far as we know, this is the first time that
results on the number of inequivalent representations have been
obtained by elementary methods.
\end{rem}

%===================================================================%

\end{document}